\newcommand{\nc}{\newcommand}
\nc{\dmo}{\DeclareMathOperator}
\nc{\nt}{\newtheorem}
\dmo{\Mod}{Mod}
\dmo{\SMod}{SMod}
\dmo{\Deck}{Deck}
\dmo{\LMod}{LMod}
\dmo{\Homeo}{Homeo}
\dmo{\color}{color}
\dmo{\perm}{perm}
\dmo{\supp}{supp}
\nc{\p}[1]{\medskip\noindent {\bf #1.}}
\title{Symmetry, Isotopy, and Irregular Covers}
\author{Rebecca R. Winarski}
\begin{document}
\begin{abstract}
We say that a cover of surfaces $S\rightarrow X$ has the Birman--Hilden property if the subgroup of the mapping class group of $X$ consisting of mapping classes that have representatives that lift to $S$ embeds in the mapping class group of $S$ modulo the group of deck transformations.  We identify one necessary condition and one sufficient condition for when a cover has this property.  We give new explicit examples of irregular branched covers that do not satisfy the necessary condition as well as explicit covers that satisfy the sufficient condition.  Our criteria are conditions on simple closed curves, and our proofs use the combinatorial topology of curves on surfaces.\end{abstract}
\maketitle
\bibliographystyle{plain}

\section{Introduction}
A general problem is to understand all (injective) homomorphisms between (finite index subgroups of) mapping class groups of surfaces, see e.g. \cite{AS,ALS,HK,IM,PB}.
Birman and Hilden proved that if $\widetilde{X}\rightarrow X$ is a regular branched covering space of surfaces, there is an embedding of a finite-index subgroup of the mapping class group of $X$ to a finite quotient of the mapping class group of $\widetilde{X}$.   The goal of this paper is to extend their results to certain irregular covers and describe a class of covers where their results do not extend.

Let $S$ be a surface, by which we mean the space obtained from a closed, oriented 2-manifold by removing finitely many points and choosing finitely many points to be distinguished, or {\it marked}.  The mapping class group of $S$, denoted $\Mod(S)$, is the group of diffeomorphisms of $S$ that leave the set of marked points invariant, up to isotopies that leave the set of marked points invariant.

Throughout the paper we assume all covering spaces of surfaces are connected, oriented and possibly branched.  Fix such a covering space $p:\widetilde{X}\rightarrow X$.  If the cover is branched, we regard the branch points as marked points in $X$.  In fact, we will assume that the only marked points in $X$ are the branch points.  In particular, the mapping class group of $X$ must fix the set of branch points.  Let $\LMod(X)$ be the finite-index subgroup of $\Mod(X)$ consisting of the isotopy classes of the diffeomorphisms of $X$ that lift to diffeomorphisms of $\widetilde{X}$.  Let $\SMod(\widetilde{X})$ be the subgroup of $\Mod(\widetilde{X})$ consisting of the isotopy classes of the fiber preserving, or {\it symmetric}, diffeomorphisms of $\widetilde{X}$.  Note that by isotopy, we mean arbitrary isotopy, not necessarily isotopy through symmetric diffeomorphisms.  

When $\chi(\widetilde{X})<0$, the group of deck transformations $\Deck(p)$ maps injectively to a subgroup of $\SMod(\widetilde{X})$ \cite[Proposition 7.7]{primer}.  Since isotopies of $X$ lift to isotopies of $\widetilde{X}$, there is a natural surjective homomorphism $\Phi:\LMod(X)\rightarrow \SMod(\widetilde{X})/\Deck(p)$.  
We say that $p:\widetilde{X}\rightarrow X$ has the {\it Birman--Hilden property} if the map $\Phi$ is an isomorphism.

In this paper, we investigate the question of which covers have the Birman--Hilden property.  The Birman--Hilden property has been studied in various cases:
\begin{itemize}
\item {\it Regular covers:} Birman and Hilden proved that regular covers have the Birman--Hilden property \cite{BH1,BH}.   MacLachlan and Harvey gave a new proof using Teichm\"uller theory \cite{MH}.  
\item  {\it Unbranched covers:} Aramayona, Leininger, and Souto proved that unbranched covers have the Birman--Hilden property \cite[Proposition 5]{ALS}.
\item {\it Irregular branched covers:}  It is easy to see that the 3-fold irregular branched cover of a surface of genus $g$ over a sphere with $2g+4$ branch points does not have the Birman--Hilden property \cite{BW,fuller}, and we explain this in Section \ref{WCL}.  In contrast, Aramayona, Leininger, and Souto found an example of an irregular branched cover that does have the Birman--Hilden property \cite{ALS}.
\end{itemize}

In summary, the Birman--Hilden property is understood for all covers except irregular branched covers where there are both examples that have the Birman--Hilden property and examples that do not have the Birman--Hilden property.  
In this paper we give one new sufficient condition and one new necessary condition for the Birman--Hilden property.   We also introduce new explicit examples that satisfy our sufficient condition and new explicit examples that do not satisfy our necessary condition.

\p{Property NU} We say a (possibly branched) covering space of surfaces $\widetilde{X}\rightarrow X$ has {\it property NU} if there is no unramified preimage in $\widetilde{X}$ of a branch point.
 \begin{theorem}\label{ramified}
Let $p:\widetilde{X}\rightarrow X$ be a finite covering space of surfaces such that $\chi(\widetilde{X})<0$.  If $p$ has property NU, then $p$ has the Birman--Hilden property.
\end{theorem}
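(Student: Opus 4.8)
\smallskip

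The strategy is to prove that $\Phi$ is injective; combined with the surjectivity noted above this gives the Birman--Hilden property. Let $f\in\LMod(X)$ lie in $\ker\Phi$, and choose a diffeomorphism $\phi$ representing $f$ that lifts to a symmetric diffeomorphism $\widetilde\phi$ of $\widetilde X$. By definition of $\ker\Phi$ the class of $\widetilde\phi$ in $\SMod(\widetilde X)$ equals that of some $d\in\Deck(p)$, so $\widetilde\phi$ is isotopic in $\widetilde X$ --- through not necessarily symmetric diffeomorphisms --- to $d$. Replacing $\widetilde\phi$ by $\widetilde\phi\circ d^{-1}$, which is still symmetric and still covers $\phi$ because $d^{-1}$ covers $\mathrm{id}_X$, we may assume in addition that $\widetilde\phi$ is isotopic to $\mathrm{id}_{\widetilde X}$. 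It then suffices to show that $\phi$ is isotopic to $\mathrm{id}_X$.

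\smallskip

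For this I would apply the Alexander method on $X$: fix a finite filling collection $\Gamma$ of essential simple closed curves and essential simple arcs in $X$ with endpoints at branch points (when the cover is unbranched, property NU is vacuous, $X$ has no marked points, one needs only curves, and one recovers the unbranched case of Aramayona--Leininger--Souto), so that it is enough to show $\phi(\gamma)$ is isotopic to $\gamma$ for each $\gamma\in\Gamma$. Isotope $\gamma$ so that it meets the branch points only in its endpoints, and take preimages: $p^{-1}(\gamma)$ is a disjoint union of simple closed curves and arcs in $\widetilde X$, and since $p\circ\widetilde\phi=\phi\circ p$ we have $\widetilde\phi(p^{-1}(\gamma))=p^{-1}(\phi(\gamma))$. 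As $\widetilde\phi$ is isotopic to the identity it preserves the isotopy class of every curve-and-arc system, so $p^{-1}(\phi(\gamma))$ is isotopic to $p^{-1}(\gamma)$ in $\widetilde X$. Hence the theorem reduces to the statement: if $\gamma,\gamma'$ are essential simple closed curves or essential simple arcs in $X$ with $p^{-1}(\gamma)$ isotopic to $p^{-1}(\gamma')$ in $\widetilde X$, then $\gamma$ is isotopic to $\gamma'$ in $X$.

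\smallskip

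I would prove this last statement --- which is where all the geometry lies, and the only place property NU enters --- by a minimal position and bigon argument. Put $\gamma$ and $\gamma'$ in minimal position in $X$; the crux is that $p^{-1}(\gamma)$ and $p^{-1}(\gamma')$ are then in minimal position in $\widetilde X$. An innermost disk in $\widetilde X$ bounded by subarcs of $p^{-1}(\gamma)$ and $p^{-1}(\gamma')$ projects under $p$ to a bigon, a half-bigon at a ramification point, or a once-marked bigon between $\gamma$ and $\gamma'$ in $X$; minimal position downstairs forbids the first two, and property NU --- every preimage of a branch point is ramified --- is exactly what rules out the remaining configuration, an innermost disk upstairs whose only preimage of a branch point is an unramified one, which would not descend to a forbidden bigon. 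Away from the branch points this is the standard fact that minimal position lifts along covers; the substance is the local analysis at the ramification points, and this is the step I expect to be the main obstacle. Once minimal position upstairs is established, $p^{-1}(\gamma)$ being isotopic to $p^{-1}(\gamma')$ forces every geometric intersection number between their components to vanish, and comparing the complementary regions of $p^{-1}(\gamma)\cup p^{-1}(\gamma')$ with those of $\gamma\cup\gamma'$ --- using the filling hypothesis on $\Gamma$ to dispose of disjoint but non-isotopic pairs --- yields $\gamma$ isotopic to $\gamma'$. Feeding this back, $\phi$ fixes every element of $\Gamma$ up to isotopy, so $\phi$ is isotopic to $\mathrm{id}_X$, $f$ is trivial, and $\Phi$ is injective.
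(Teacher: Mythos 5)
Your overall strategy --- reduce injectivity of $\Phi$ to the statement that curves in $X$ with isotopic preimages are themselves isotopic, and prove that statement by a bigon argument in which property NU rules out innermost disks upstairs containing unramified preimages of branch points --- is exactly the skeleton of the paper's proof (the paper calls the key statement the isotopy projection property, and your bigon analysis is its Lemma \ref{bigons}). But your endgame has a genuine gap. From ``$\phi$ fixes every element of a filling collection up to isotopy'' you conclude ``$\phi$ is isotopic to $\mathrm{id}_X$,'' and that implication is false: the hyperelliptic involution fixes the isotopy class of \emph{every} simple closed curve on a closed genus-two surface without being isotopic to the identity. Fixing the isotopy classes of a filling system of unoriented curves only pins $\phi$ down up to a finite-order ambiguity. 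The paper confronts this head-on: it uses the Casson--Bleiler criterion to conclude only that $\phi$ has \emph{finite order}, and then needs a separate argument (Proposition \ref{tf}) that the kernel of $\Phi$ is torsion-free, via Fenchel--Nielsen realization and the fact that a finite-order diffeomorphism of a surface with $\chi<0$ isotopic to the identity \emph{is} the identity. You are missing this step entirely, and it is not cosmetic.

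Two further problems. First, your filling collection includes arcs with endpoints at branch points, but the preimage of such an arc terminates at points of $p^{-1}(B)$, which are \emph{not} marked in $\widetilde X$; such lifted arcs are inessential, any two systems of them are isotopic, and so the hypothesis ``$p^{-1}(\gamma)$ isotopic to $p^{-1}(\gamma')$'' carries no information for arcs. The reduction fails for arcs, which is precisely why the paper states the isotopy projection property for simple closed curves only (and why it needs Casson--Bleiler rather than the Alexander method, since the former requires only closed curves). Second, even after the bigon argument makes $\gamma$ and $\gamma'$ disjoint with disjoint isotopic preimages, you still must show they are isotopic \emph{in $X$ rel the branch points}: a priori they could cobound an annulus in $\bar X$ containing branch points. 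Ruling this out is the content of the paper's Lemma \ref{annuli}, which requires a combinatorial argument that the preimage of one complementary region is a union of annuli, followed by Riemann--Hurwitz to exclude branch points. Your phrase ``comparing the complementary regions \ldots using the filling hypothesis'' does not engage with this case, and it is where a real argument is needed.
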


Farb and Margalit gave a new proof of the Birman--Hilden property for the regular branched cover induced by the hyperelliptic involution of a surface \cite[Theorem 9.2]{primer}.  We generalize their approach, which translates the Birman--Hilden property, a property of diffeomorphisms, into a property of simple closed curves. 

We prove Theorem \ref{ramified} in Section \ref{maintheorems}, which is the technical heart of the paper.    
Theorem \ref{ramified} recovers the result of Aramayona, Leininger, and Souto \cite{ALS} that unbranched covers have the Birman--Hilden property.  It also recovers the regular case proved by Birman and Hilden.  In addition, Theorem \ref{ramified} makes it easy to verify that some covers have the Birman--Hilden property, where it was previously unknown.  Examples are shown in Section \ref{examples}.

Theorem \ref{ramified} can also be proved using Teichm\"uller theory, following some of the ideas in the work of MacLachlan and Harvey.  We sketch a proof in Section \ref{maintheorems}.

\p{Curve lifting property}   An essential curve is a curve that is not isotopic to a boundary component or a single point.  
We say that a covering space of surfaces has the {\it weak curve lifting property} if the preimage in $\widetilde{X}$ of every essential simple closed curve in $X$ has at least one essential connected component.

 \begin{proposition}\label{necessity}
Let $p:\widetilde{X}\rightarrow X$ be a finite covering space of surfaces such that $\chi(\widetilde{X})<0$.
If $\widetilde{X}\rightarrow X$ has the Birman--Hilden property, then it has the weak curve lifting property. \end{proposition}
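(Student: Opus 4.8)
The plan is to prove the contrapositive: assuming $p$ fails the weak curve lifting property, I will produce a nontrivial element of $\LMod(X)$ in the kernel of $\Phi$, contradicting the Birman--Hilden property. So suppose there is an essential simple closed curve $c$ in $X$ such that every connected component of the preimage $p^{-1}(c)$ is inessential in $\widetilde X$. The element I will use is a suitable power of the Dehn twist $T_c$.

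First I would analyze the cover over an annular neighborhood $A$ of $c$. Since $A$ deformation retracts to a circle, every connected finite cover of $A$ is again an annulus and is cyclic over $A$; hence $p^{-1}(A) = \widetilde A_1 \sqcup \cdots \sqcup \widetilde A_r$, where each $\widetilde A_i \to A$ has some degree $d_i \geq 1$, and the components of $p^{-1}(c)$ are exactly the core curves $\widetilde c_1, \dots, \widetilde c_r$ of the annuli $\widetilde A_i$, with $\widetilde c_i \to c$ of degree $d_i$. Put $L = \mathrm{lcm}(d_1, \dots, d_r) \geq 1$. Working in coordinates $A \cong S^1 \times [0,1]$ and $\widetilde A_i \cong S^1 \times [0,1]$ with covering map $(w,t) \mapsto (w^{d_i}, t)$, a direct computation shows that the Dehn twist power $T_c^{L}$ in standard form (identity outside $A$) lifts to the diffeomorphism $g$ of $\widetilde X$ that is the identity on $p^{-1}(X \setminus A)$ and restricts on each $\widetilde A_i$ to $T_{\widetilde c_i}^{L/d_i}$. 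The key point is that a boundary--trivial lift of $T_c^{L}$ exists on $\widetilde A_i$ precisely because $d_i \mid L$, and it is then the stated integer power of the twist about the core. In particular $T_c^{L} \in \LMod(X)$ with symmetric lift $g$, so $\Phi(T_c^{L}) = [g]\,\Deck(p)$.

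Now I would feed in the hypothesis on $c$. Each core curve $\widetilde c_i$ is inessential, so each $T_{\widetilde c_i}$ is trivial in $\Mod(\widetilde X)$; hence $g = \prod_i T_{\widetilde c_i}^{L/d_i}$ is isotopic to the identity and $\Phi(T_c^{L})$ is the trivial element of $\SMod(\widetilde X)/\Deck(p)$. On the other hand $T_c$ has infinite order in $\Mod(X)$ because $c$ is essential, so $T_c^{L} \neq 1$ since $L \geq 1$. Thus $\Phi$ has nontrivial kernel, contradicting injectivity of $\Phi$; hence $p$ must have the weak curve lifting property.

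I expect the only real work to be the explicit identification of the lift of $T_c^{L}$ as the product $\prod_i T_{\widetilde c_i}^{L/d_i}$ --- the local annulus computation above --- where the thing to get right is the bookkeeping of covering degrees: why $L = \mathrm{lcm}(d_i)$ is the power that makes the lift boundary--trivial, and why the restriction to $\widetilde A_i$ is exactly the $(L/d_i)$-th power of the twist about $\widetilde c_i$. The remaining ingredients are standard: connected finite covers of an annulus are annuli, a mapping class that lifts does so uniquely up to deck transformations, Dehn twists about inessential curves are trivial, and Dehn twists about essential curves have infinite order. Note that $\chi(\widetilde X) < 0$ enters only to guarantee that $\Phi$ is defined as stated, i.e. that $\Deck(p)$ embeds in $\SMod(\widetilde X)$.
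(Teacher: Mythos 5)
Your proof is correct and follows essentially the same route as the paper's: take the contrapositive, show a suitable power of the Dehn twist about the bad curve lies in $\LMod(X)$, observe that its lift is a product of Dehn twists about the inessential components of the preimage and hence isotopic to the identity, and conclude that $\Phi$ has nontrivial kernel. The only difference is that you identify the liftable power explicitly as $L=\mathrm{lcm}(d_i)$ via the local annulus computation, whereas the paper simply invokes the finite index of $\LMod(X)$ in $\Mod(X)$ to obtain some power $k$; your version is a slightly more concrete account of the same argument.
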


We prove Proposition \ref{necessity} in Section \ref{WCL}.  

\medskip
The weak curve lifting property can be checked algorithmically.  Theorem \ref{checkingcl} gives an algorithm for choosing a finite set of simple closed curves sufficient for checking the weak curve lifting property.

\medskip
As a consequence of Proposition \ref{necessity}, we have the following theorem.

\begin{theorem}\label{simple}
Let $p:\widetilde{X}\rightarrow X$ be a $n$-fold covering space of surfaces such that $\chi(\widetilde{X})<0$ and the complement of the branch points in $X$ is not a sphere with three punctures. Suppose also that $n\geq 3$ and that $X$ has at least two branch points.
If $p$ is simple, then $\widetilde{X}\rightarrow X$ does not have the Birman--Hilden property.
\end{theorem}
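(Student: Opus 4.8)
The plan is to deduce the statement from Proposition \ref{necessity}. Since that proposition asserts that the Birman--Hilden property implies the weak curve lifting property, it suffices to show that a simple cover satisfying the hypotheses \emph{fails} the weak curve lifting property; that is, to exhibit an essential simple closed curve $c$ in $X$ each of whose preimage components in $\widetilde X$ is inessential. The curve will be taken of the form $c=\partial D$ for a cleverly chosen embedded disk $D\subseteq X$ whose interior contains some of the branch points. The point of working with a \emph{simple} cover is that the preimage of a disk neighborhood of a branch point is as simple as possible: a single ``folded'' disk on the two sheets named by the transposition, together with $n-2$ trivially covered disk copies. More generally, if $D$ contains branch points $b_{i_1},\dots,b_{i_k}$ with monodromy transpositions $\sigma_{i_1},\dots,\sigma_{i_k}$, then the components of $p^{-1}(D)$ correspond to the orbits of $\langle\sigma_{i_1},\dots,\sigma_{i_k}\rangle$ on the $n$ sheets, and a short Euler-characteristic computation via Riemann--Hurwitz shows that the component lying over an orbit $O$ is a disk exactly when precisely $|O|-1$ of the chosen transpositions are supported in $O$. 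Hence $p^{-1}(D)$ is a disjoint union of disks precisely when the transpositions $\sigma_{i_1},\dots,\sigma_{i_k}$, regarded as edges of a graph on the set of sheets, form a forest; in that case every component of $p^{-1}(c)=p^{-1}(\partial D)$ bounds a disk in $\widetilde X$, so is null-homotopic, hence inessential.

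The first main step, then, is to produce such a disk $D$ containing at least two branch points and with $X\setminus D$ still ``large.'' Since $X$ has at least two branch points, whenever two of them carry distinct transpositions I take $D$ to be a disk enclosing exactly those two: two distinct transpositions always form a forest (a single edge or a path of length two). One then checks that $c=\partial D$ is essential. This is where the remaining hypotheses enter: $c$ could fail to be essential only if it were null-homotopic or isotopic to a puncture or boundary in $X$, which would force $X$ to be a sphere all of whose branch points lie in $D$ --- impossible, since then $\chi(\widetilde X)>0$ by Riemann--Hurwitz --- or a sphere with exactly one branch point outside $D$, i.e.\ a sphere with exactly three branch points, which is excluded by hypothesis. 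So in all remaining cases $X\setminus D$ contains either two branch points or positive genus, and $c$ is essential; the contrapositive of Proposition \ref{necessity} then gives the theorem.

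The step I expect to be the main obstacle is the degenerate configuration in which every branch point of the cover carries one and the same transposition (which, because $n\geq 3$, can occur only when $X$ has positive genus, since otherwise the cover would be disconnected). Here any two branch points contribute parallel edges, so no disk in $X$ has all of its preimage components disks --- and since a branched cover of a negative-Euler-characteristic subsurface is never a disk, one genuinely cannot use the disk construction above. This case must be handled separately, for instance by using the genus of $X$ and the sheet structure to build a different essential curve whose preimage components are all null-homotopic (or peripheral), or by a direct argument in the spirit of the irregular example discussed in Section \ref{WCL}. Pinning down this case, together with carefully verifying essentiality of $c$ in the borderline configurations --- which is exactly why the hypotheses $n\geq 3$, at least two branch points, and ``not a thrice-punctured sphere'' are imposed --- is the technical heart of the argument.
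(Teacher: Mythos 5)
Your overall strategy is exactly the paper's: find an embedded disk $D$ containing two branch points whose local monodromy transpositions are distinct, observe that the associated graph on the sheets is then a forest so that $p^{-1}(D)$ is a union of disks (hence every component of $p^{-1}(\partial D)$ is inessential), check that $\partial D$ is essential using the ``not a thrice-punctured sphere'' hypothesis, and conclude via the contrapositive of Proposition \ref{necessity}. All of that matches the paper's proof, including your correct observation that on a sphere the transpositions cannot all coincide because the monodromy image must be transitive on $n\geq 3$ sheets.

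However, there is a genuine gap: the positive-genus case in which every branch point appears to carry the same transposition. You explicitly flag this as ``the technical heart of the argument'' and leave it unresolved, suggesting one might need some entirely different curve construction. The missing idea is that the transposition attached to a branch point is \emph{not} intrinsic --- it is only well defined up to conjugacy, and it depends on the choice of arc from the basepoint to that branch point. This is the content of the paper's Lemma \ref{goodgamma}: if with respect to an initial arc system every $\color(x_i)$ equals $(12)$, choose a nonseparating simple loop $\delta$ in $X^{\circ}$ disjoint from a disk containing the arcs with $\rho(\delta)\cdot 1\notin\{1,2\}$ (such $\delta$ exists because $n\geq 3$, the monodromy is transitive, and $\pi_1(X^{\circ},x_0)$ is generated by the $c_i$ together with such loops), and replace $\gamma_1$ by the arc $\gamma_1'$ corresponding to $\delta c_1\delta^{-1}$. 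Then $\color(x_1)=(\rho(\delta)\cdot 1\ \ \rho(\delta)\cdot 2)\neq(12)$, the new arc is still disjoint from the others, and your two-branch-point disk argument applies verbatim to a neighborhood of $\gamma_1'\cup\gamma_2$. So no separate curve construction is needed; the ``degenerate'' case dissolves once you allow yourself to re-choose the arc system.
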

We prove Theorem \ref{simple} in Section \ref{Simple}.  
The 3-fold irregular branched cover of a surface of genus $g$ over a sphere with $2g+4$ branch points described above is a simple cover, so Theorem \ref{simple} generalizes this example.

\p{Curve complex} It is well known that the mapping class group of a surface $S$ is the group of automorphisms of the curve complex $\mathcal{C}(S)$ of $S$.  Let $\widetilde{X}\rightarrow X$ be a covering space.  It is natural to ask if there is any relationship between $\mathcal{C}(X)$ and $\mathcal{C}(\widetilde{X})$.  Luo showed that there is an isomorphism between $\mathcal{C}(X)$ and $\mathcal{C}(\widetilde{X})$ in only three cases \cite{luo}.  Rafi and Schleimer showed that there is a quasi-isometric embedding from $\mathcal{C}(X)$ to $\mathcal{C}(\widetilde{X})$ when $\widetilde{X}\rightarrow X$ is an unbranched covering space, but there is not a quasi-isometric embedding from $\mathcal{C}(X)$ to $\mathcal{C}(\widetilde{X})$ in general \cite{RS}.  Further research could pursue necessary and sufficient conditions for the existence of an quasi-isometric embedding from $\mathcal{C}(X)$ to $\mathcal{C}(\widetilde{X})$.  In particular: if $\widetilde{X}\rightarrow X$ has the Birman--Hilden property, is there a quasi-isometric embedding from $\mathcal{C}(X)$ to $\mathcal{C}(\widetilde{X})$?

\p{Ramification number} 
We say that a covering space of surfaces has the {\it equal ramification number property} if for every point $x\in X$, all points of $p^{-1}(x)$ have the same ramification number (see Section \ref{previous} for the definition of ramification number).  Regular covers and unbranched covers have the equal ramification number property.    The equal ramification number property implies Property NU.  Therefore by Theorem \ref{ramified}, covers that have the equal ramification number property have the Birman--Hilden property.
There are some irregular, branched covers that have equal ramification number property.

One might guess that the Birman--Hilden property is equivalent to the equal ramification number property.  However, Aramayona, Leininger, and Souto give an example of a cover that has unramifed preimage and the Birman--Hilden property \cite[Section 4]{ALS}.  

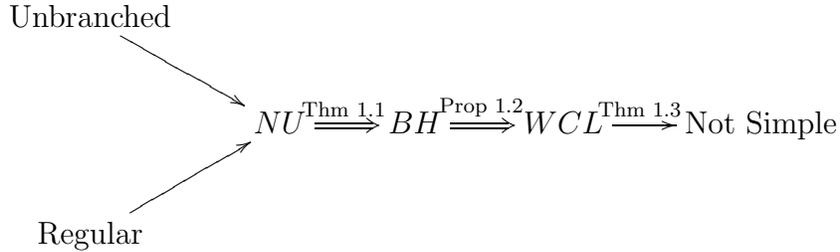
\begin{figure}
\label{diagram}
\begin{displaymath}
\xymatrix{\text{Unbranched}\ar@{->}[rd]&  &\\&NU\ar@{=>}[r]^{\text{Thm \ref{ramified}\ }}&BH\ar@{=>}[r]^{\text{Prop \ref{necessity}\ \ }} &WCL\ar@{->}[r]^{\text{Thm \ref{simple}\ \ \ \ \ \ }}&\text{Not Simple}\\
\text{Regular}\ar@{->}[ru]& &&&}
\end{displaymath}
\caption{Implications of relevent properties of finite covers.  The double arrows represent the main results of the paper.  The single arrows are implications proved in corollaries.}
\end{figure}
\p{Acknowledgments} The author would like to thank her advisor, Dan Margalit, for suggesting the problem and many discussions.  She would also like to thank Ian Agol, Joan Birman, Meredith Casey, John Etnyre, Allen Hatcher, Chris Leininger, and Andrew Putman for helpful comments and conversations.

\section{Branched Covers}\label{previous}
In this section, we give definitions related to branched covers.  We also show a construction of irregular branched covers that have Property NU, hence the Birman--Hilden property.

\subsection{Branched covers} 
Let $X$ be a surface.  A {\it branched cover} over $X$ is a surface $\widetilde{X}$ and a map $p:\widetilde{X}\rightarrow X$ such that for some finite set $B$ in $X$ we have:\begin{enumerate} \item $p|_{\widetilde{X}\setminus p^{-1}(B)}$ is a covering map; \item for each $x\in B$ there exists an open neighborhood $U\subset X$, called a {\it standard neighborhood}, such that \begin{enumerate} \item $U\cap  B=x$\item Each component $V_i$ of $p^{-1}(U)$ is an open disk and contains one point $\widetilde{x}_i\in p^{-1}(x)$ \item Each $p|_{V_i\setminus \widetilde{x}_i}$ is covering map $V_i\setminus \widetilde{x}_i\rightarrow U\setminus x$; we refer to the degree of the cover, $m_i$, as the {\it ramification number} of $\widetilde{x}_i$ \item There is at least one $\widetilde{x}_i$ such that $m_i>1$\end{enumerate}\end{enumerate}  

We say that a preimage $\widetilde{x}_i$ of $x$ is {\it ramified} if its ramification number $m_i$ is greater than 1; otherwise it is {\it unramified}.  Each point in $B$ is called a {\it branch point}.  A branch point may have preimages with different ramification numbers.  

Recall that in a branched covering space of surfaces $p:\widetilde{X}\rightarrow X$, each branch point of $X$ is considered as a marked point.  In particular, $\Mod(X)$ is defined relative to the branch points.
Throughout the paper, let $X^{\circ}$ be the (noncompact) surface that is the complement of the branch points in $X$.  Let $p:\widetilde{X}\rightarrow X$ be a branched covering space of surfaces.  Then we will define $\widetilde{X}^{\circ}=p^{-1}(X^{\circ})$.

A {\it deck transformation} of a branched covering space of surfaces $p:\widetilde{X}\rightarrow X$ is a homeomorphism of $\widetilde{X}$ that restricts to a deck transformation of the covering space $\widetilde{X}^{\circ}\rightarrow X^{\circ}$.  A covering space $\widetilde{X}\rightarrow X$ is {\it regular} if the group of deck transformations acts transitively on the fibers of each point in $X$, and is {\it irregular} otherwise.

\subsection{Simple closed curves and branched covers}
Let $p:\widetilde{X}\rightarrow X$ be a covering space with branch points $B\subset X$.  A {\it simple closed curve} in $X$ is the image of an injective map $S^1\rightarrow X^{\circ}\subseteq X$.  In particular, there is a bijective correspondence between simple closed curves in $X$ and simple closed curves in $X^{\circ}$.  There is not a bijective correspondence between simple closed curves in $\widetilde{X}$ and simple closed curves in $\widetilde{X}^{\circ}$ because simple closed curves in $\widetilde X$ are allowed to contain points of $p^{-1}(B)$.
As a result, curves that are isotopic in $X$ are isotopic in $X^{\circ}$, but curves that are isotopic in $\widetilde{X}$ might not be isotopic in $\widetilde{X}^{\circ}$.

A {\it multicurve} in a surface $S$ is a set of pairwise disjoint simple closed curves $\{\alpha_1,\cdots,\alpha_k\}$ in $S$.  We say that two multicurves $\alpha=\{\alpha_1,\cdots,\alpha_k\}$ and $\beta=\{\beta_1,\cdots,\beta_l\}$ are isotopic if $k=l$, and they are homotopic through $k$-component multicurves.    

\subsection{Constructing irregular, branched covers}\label{examples}
One method of constructing irregular covers is by cutting surfaces and sewing their resulting boundary components as follows.  Let $S_g^b$ denote a surface of genus $g$ with $b$ boundary components.  Figure \ref{sixfold} shows a 6-fold cover of a torus with two branch points by $S^0_4$. 
\begin{figure}[h]
\begin{center}

\labellist\small\hair 2.5pt
        \pinlabel {{\it cut}} by 0 .5 at 120 234
         \pinlabel {{\it rotate}} by 0 0 at 125 137
\pinlabel {$\alpha$} by 0 0 at 150 20
\pinlabel {$\widetilde{\alpha}$} by 0 0 at 48 270
\pinlabel {$\widetilde{\alpha}$} by 0 0 at 92 270
\pinlabel {$\widetilde{\alpha}$} by 0 0 at 138 270
 \pinlabel {{\it sew}} by 0 .5 at 123 66 
        \endlabellist
\includegraphics[scale=1]{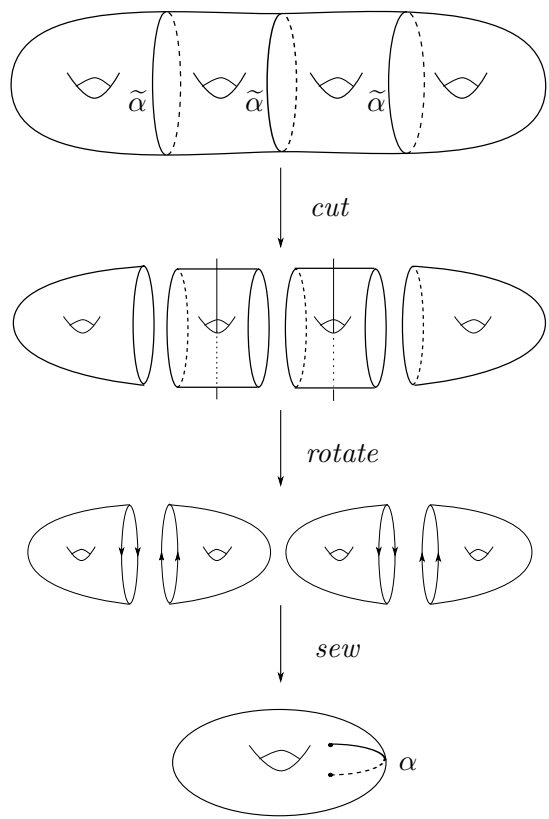}
\end{center}
\caption{The 6-fold branched, irregular covering of the twice marked torus comes from cutting along the curves marked and identifying along boundaries}
\label{sixfold}
\end{figure}
We construct the covering map in 3 steps:
\begin{enumerate}
 \item Cut $S_4^0$.
\item Map each $S_1^d$ to $S_1^1$ by any covering map.
\item Sew the boundary of each $S_1^1$.
\end{enumerate}
The covering space is constructed by cutting $S^0_4$ along the curves $\widetilde{\alpha}$ into four tori with one or two boundary components as shown in Figure \ref{sixfold}.  Then take a quotient by rotation of each $S_1^d$ by $\frac{2\pi}{d}$.  We then have four disjoint copies of $S_1^1$.  There is a map from the four disjoint copies of $S_1^1$ to the torus with two branch points induced by identifying the top and bottom of the boundary component of each $S_1^1$ as indicated by the arrows.  This procedure constructs a well-defined 6-fold covering space of $S^0_4$ over the torus with two branch points.

\begin{figure}[h]
\begin{center}
\includegraphics[scale=1]{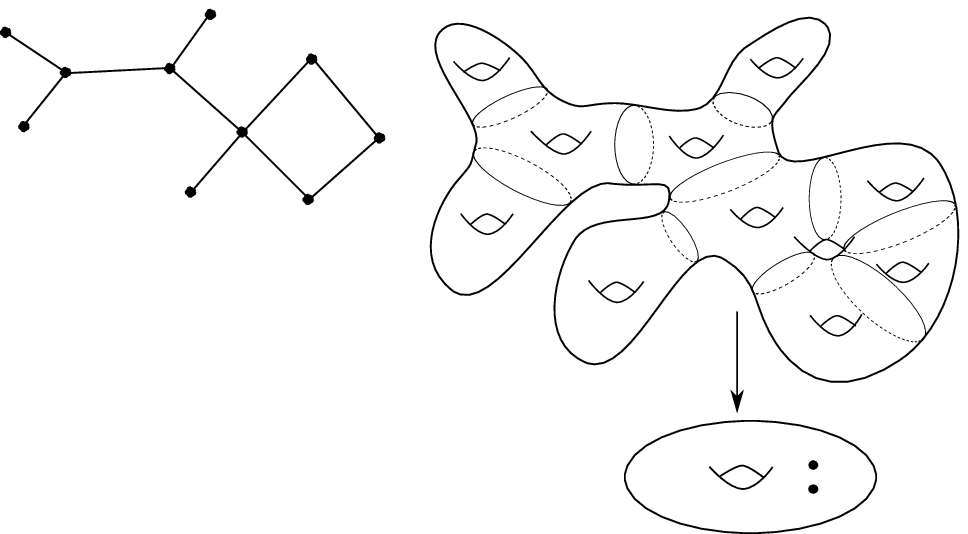}
\end{center}
\caption{Constructing an irregular branched cover from a graph.}
\label{graph}
\end{figure}

All three preimages of each branch point are ramified, so the cover has property NU.  Therefore by Theorem \ref{ramified} the covering space described above has the Birman--Hilden property.

One can generalize the cut-and-identify construction in Figure \ref{sixfold} to make a family of covers of the torus with two branch points as follows.  In fact, we can make a cover of any even degree.

\begin{theorem}
Let $G$ be a finite graph.  Let $X$ be a torus with two branch points.  One may construct a finite covering space of surfaces $\widetilde{X}\rightarrow X$ corresponding to $G$ that has the Birman--Hilden property.
\end{theorem}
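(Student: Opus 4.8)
The plan is to generalize the cut-and-sew recipe of Figure~\ref{sixfold} to one that takes as input an arbitrary finite graph $G$ and outputs a branched cover of the twice-branched torus, and then to verify that every cover produced this way satisfies the hypotheses of Theorem~\ref{ramified}. Throughout, fix once and for all the once-holed torus $S_1^1$ together with the ``fold'' map $\pi\colon S_1^1\to X$ that identifies the two halves of $\partial S_1^1$ as in Figure~\ref{sixfold}: then $X$ is the torus with two marked points $P,Q$ (the images of the two fixed points of the fold, where $\pi$ locally looks like a cone of angle $\pi$), and away from $\{P,Q\}$ the cover will be unbranched. Also fix, for each $d\ge 1$, the degree-$d$ cyclic (``rotate by $2\pi/d$'') cover $q_d\colon S_1^d\to S_1^1$, which is unbranched and restricts to a homeomorphism on each of its $d$ boundary circles.

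Given a finite connected graph $G$ with at least one edge (disconnected $G$ falls outside the standing convention that covers are connected, and the edgeless case is degenerate), I would build $\widetilde X$ as follows. For each vertex $v$ of valence $d_v$ (loops counted twice) take a copy $\Sigma_v$ of $S_1^{d_v}$ with its map $q_{d_v}\colon\Sigma_v\to S_1^1$; its $d_v$ boundary circles correspond to the half-edges of $G$ at $v$. For each edge $e=vw$, glue the boundary circle of $\Sigma_v$ at one half-edge of $e$ to the boundary circle of $\Sigma_w$ at the other, using the homeomorphism obtained from the identifications of both circles with $\partial S_1^1$ (via $q_{d_v}$ and $q_{d_w}$), composed with the deck transformation of the $2$-to-$1$ fold $\partial S_1^1\to\pi(\partial S_1^1)$; this twist is exactly what makes the two sheets along a gluing circle map to opposite sides of the fold arc. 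The result $\widetilde X$ is a closed oriented surface (every boundary circle is used exactly once, and orientations can be matched), connected because $G$ is, and the maps $\pi\circ q_{d_v}$ agree on each gluing circle and so assemble into a map $p\colon\widetilde X\to X$. Over $X\setminus\pi(\partial S_1^1)\cong\mathrm{int}(S_1^1)$ this $p$ is the disjoint union of the unbranched covers $q_{d_v}$; over the interior of the fold arc it is an unbranched $2|E(G)|$-fold cover (this is the point of the deck-transformation twist); and over $P,Q$ it is branched. Hence $p$ is a branched covering space with branch locus exactly $\{P,Q\}$ and degree $\sum_v d_v = 2|E(G)|$, so in particular one obtains covers of every even degree.

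To finish, I would check the two hypotheses of Theorem~\ref{ramified}. A point of $\widetilde X$ over $P$ (or $Q$) lies on a gluing circle, and in the local model is the fixed point of the fold precomposed with a local homeomorphism coming from some $q_{d_v}$, so it has ramification number $2$; there is exactly one such point on each of the $|E(G)|$ gluing circles, so every preimage of $P$ and of $Q$ is ramified, i.e. $p$ has Property NU. For the Euler characteristic, gluing along circles is additive, so $\chi(\widetilde X)=\sum_v\chi(S_1^{d_v})=\sum_v(-d_v)=-2|E(G)|<0$ (equivalently, $\widetilde X$ has genus $|E(G)|+1$, or apply Riemann--Hurwitz with degree $2|E(G)|$). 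With $\chi(\widetilde X)<0$ and Property NU in hand, Theorem~\ref{ramified} applies and gives the Birman--Hilden property for $p\colon\widetilde X\to X$, as desired.

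I expect the real work to be entirely in the second paragraph: pinning down the gluing homeomorphisms so that the block covers $q_{d_v}$ assemble into a well-defined branched cover of the sewn-up torus $X$ rather than of $S_1^1$, so that no extra branching is created along the gluing circles, and so that $\widetilde X$ comes out connected and oriented. Once that bookkeeping is settled, Property NU and the Euler characteristic bound are short local verifications, and the conclusion is an immediate appeal to Theorem~\ref{ramified}.
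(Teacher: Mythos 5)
Your proposal is correct and follows essentially the same construction as the paper: one copy of $S_1^{d_v}$ per vertex mapped to $S_1^1$ by the cyclic cover, boundary circles glued according to edges, and the fold of $\partial S_1^1$ producing the two branch points, after which Property NU and Theorem \ref{ramified} give the conclusion. Your added care about the gluing homeomorphisms, connectedness, and the check that $\chi(\widetilde{X})=-2|E(G)|<0$ only makes explicit what the paper leaves implicit.
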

\begin{proof}
We first construct $\widetilde{X}$.  For each vertex of $G$ of degree $d$, start with one copy of $S_1^d$.  Then identify the boundaries of two components if and only if there is an edge between the corresponding vertices of $G$.  Figure \ref{graph} shows the surface constructed from a graph.

To create the covering map, we repeat the procedure as above.  First cut $\widetilde{X}$ at each corresponding edge of $G$.  For every integer $d\geq 1$, take the quotient by rotation of $S_1^d$ by $2\pi/d$ of each $S_1^d$ to obtain a copy of $S_1^1$, analogously to the example in Figure \ref{sixfold} above.  Then we have a disjoint union of components each of which is $S_1^1$.  For each torus, identify the top and bottom of the boundary component, creating two fixed points.  We then have a cover of the torus with two branch points by a disjoint union of tori with two marked points.

As above, one can show that $\widetilde{X}\rightarrow X$ is a branched covering space. In particular, all preimages of both branch points have ramification number 2.  Therefore the cover has property NU.

By Theorem \ref{ramified}, $\widetilde{X}\rightarrow X$ has the Birman--Hilden property.
\end{proof}
By choosing $G$ to be a planar graph with no automorphisms, we construct $\widetilde{X}\rightarrow X$ to be an irregular branched covering space of surfaces.

\section{The Weak Curve Lifting property}
\label{WCL}
In this section, we prove Proposition \ref{necessity}, which states that the Birman--Hilden property implies the weak curve lifting property.   Additionally we prove that there is an algorithm to check the weak curve lifting property.
\subsection{Proof of Proposition \ref{necessity}}
First we prove that covers with the Birman--Hilden property have the weak curve lifting property.
\begin{proof}[Proof of Proposition \ref{necessity}]
 Suppose that the cover $p:\widetilde{X}\rightarrow X$ does not have the weak curve lifting property, that is, there exists an essential simple closed curve $\gamma$ in $X$ such that $p^{-1}(\gamma)$ has no essential connected component.  We want to show that $\Phi:\LMod(X)\rightarrow\SMod(\widetilde{X})/\Deck(p)$ has nontrivial kernel.

Let $\varphi:X\rightarrow X$ be a Dehn twist about $\gamma$.  
Since $\LMod(X)$ has finite index in $\Mod(X)$, there is some integer $k$ such that the class of $\varphi^k$ is in $\LMod(X)$.  The isotopy class of $\varphi^k$ is a nontrivial mapping class because it is a power of a Dehn twist about an essential simple closed curve \cite[Proposition 3.1]{primer}.

Since $\varphi^k$ is supported on a neighborhood of $\gamma$, there is a preferred lift $\widetilde{\varphi}^k:\widetilde X\rightarrow\widetilde X$ of $\varphi^k$ that is supported on a neighborhood of the components of $p^{-1}(\gamma)$, each of which is an annulus.  Therefore $\widetilde{\varphi}^k$ is a composition of Dehn twists in these annuli.  Since all components of $p^{-1}(\gamma)$ are inessential, the Dehn twist around each component of $p^{-1}(\gamma)$ is isotopic to $id_{\widetilde{X}}$.  Therefore $\widetilde{\varphi}^k$ is isotopic to $id_{\widetilde{X}}$, and hence its class is the trivial mapping class.  
Since $\varphi^k$ does not represent the trivial mapping class but is in the kernel of $\Phi$, $p$ does not have the Birman--Hilden property.
\end{proof}

\subsection{A covering space without the Birman-Hilden property} 
Consider the 3-fold simple branched covering space of surfaces $p:\widetilde{X}\rightarrow X$ where $\widetilde{X}$ is a closed surface of genus $g$ and $X$ is a sphere with $2g+4$ branch points; (this covering space is unique up to isomorphism).   The example where $g=3$ is shown in Figure \ref{fullerp}.  The preimages of the branch points are shown but, as usual, are not thought of as marked points in $\widetilde{X}$.

\begin{figure}[h]
\begin{center}

\labellist\small\hair 2.5pt
        \pinlabel {$X$} by 1 0 at 230 50
        \pinlabel {$\gamma$} by -1 1 at 150 80
          \pinlabel $\gamma_2$ by 0 1 at 250 270 
         \pinlabel $\gamma_3$ at 250 290 
        \pinlabel $\widetilde X$ [lb] at 270 275
        \pinlabel $\gamma_1$ by 0 0 at 242 160
        \endlabellist
\centerline{\includegraphics[width=3in]{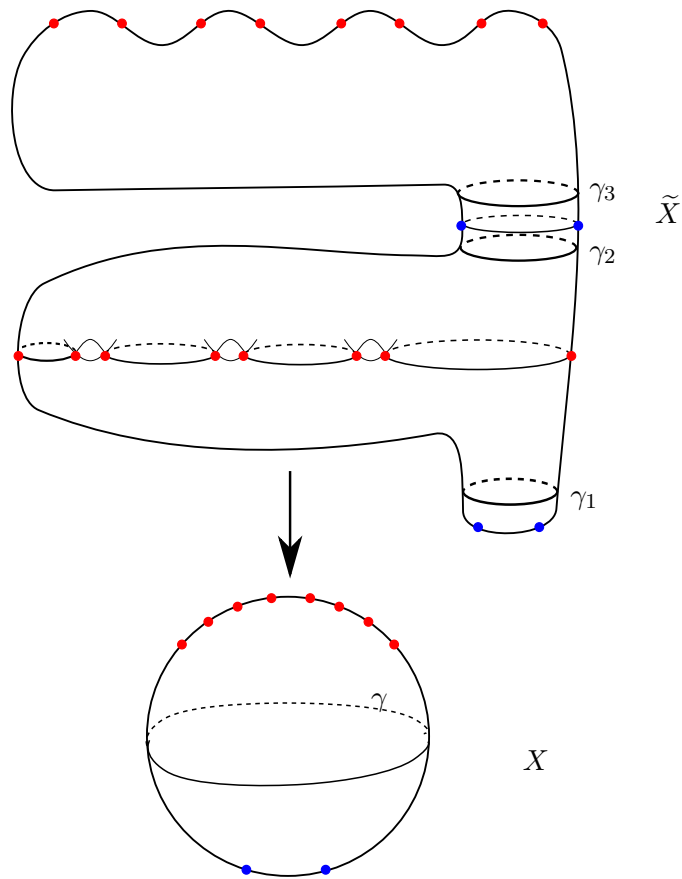}}
\end{center}
\caption{An irregular, branched covering space that does not have the Birman--Hilden property.}

\label{fullerp}\end{figure}

We can choose $p$ so that the preimage of the simple closed curve $\gamma$ in Figure \ref{fullerp} is the union of $\widetilde{\gamma}_1,\widetilde{\gamma}_2,$ and $\widetilde{\gamma}_3$ in $\widetilde{X}$.  Since each $\widetilde{\gamma}_i$ is inessential, the covering space $p$ does not have the weak curve lifting property, and therefore does not have the Birman--Hilden property by Proposition \ref{necessity}.

The covering space $p$ is one example of a simple cover that does not have the Birman--Hilden property.  See Fuller's paper \cite{fuller} for more discussion of this example.  Theorem \ref{simple}, proven in Section \ref{Simple} below, says that no simple covers with degree at least 3 and at least two branch points have the weak curve lifting property.  Therefore no such covers have the Birman--Hilden property.

\subsection{Checking the weak curve lifting property}\label{algorithm}
The usefulness of the weak curve lifting property lies in the fact that given a covering space $\widetilde{X}\rightarrow X$, one only needs to check a finite number of curves in $X$ in order to determine whether or not it holds.  Our next goal is to show that there is an algorithm to select a sufficient set of simple closed curves, as we will prove in Theorem \ref{checkingcl} below.  First we need a proposition for unbranched covers.  

\begin{proposition}\label{cosets}
Let $p:\widetilde{X}\rightarrow X$ be an $n$-fold unbranched covering space of surfaces.  
There is an algorithm to find coset representatives of $\LMod(X)$ in $\Mod(X).$
\end{proposition}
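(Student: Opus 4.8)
The plan is to reduce the statement to a routine orbit--stabilizer computation for a finite permutation action. Since $p$ is unbranched, the $n$-fold cover is classified by a transitive homomorphism $\rho_p\colon\pi_1(X)\to S_n$, well defined up to conjugation in $S_n$; equivalently, by the conjugacy class of the index-$n$ subgroup $H=\rho_p^{-1}(\operatorname{Stab}(1))\le\pi_1(X)$. The key point is that a diffeomorphism $f$ of $X$ lifts to a diffeomorphism of $\widetilde X$ if and only if the outer automorphism $f_*$ of $\pi_1(X)$ sends the conjugacy class of $H$ to itself. Indeed, the covering-space lifting criterion produces a continuous lift of $f$ exactly when $f_*(H)$ is conjugate to $H$ (allowing the basepoint lift and a connecting path to be chosen appropriately), and a lift of the homeomorphism $f$ is automatically a diffeomorphism: a lift of $f^{-1}$ exists for the same reason, and composing the two lifts gives a deck transformation, so each lift is smooth and invertible with smooth inverse. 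As isotopic diffeomorphisms induce the same outer automorphism, and the condition ``preserves the conjugacy class of $H$'' depends only on the outer automorphism (inner automorphisms fix every conjugacy class of subgroups), we obtain a well-defined action of $\Mod(X)$ on the finite set $\Sigma$ of $S_n$-conjugacy classes of transitive homomorphisms $\pi_1(X)\to S_n$ (equivalently, of conjugacy classes of index-$n$ subgroups of $\pi_1(X)$), and $\LMod(X)$ is exactly the stabilizer of the point $[\rho_p]\in\Sigma$. Consequently the cosets of $\LMod(X)$ in $\Mod(X)$ are in bijection with the $\Mod(X)$-orbit of $[\rho_p]$, via $g\LMod(X)\mapsto g\cdot[\rho_p]$.

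With this in hand, I would carry out the algorithm as follows. \emph{(1)} From the topological data of $X$, write down a finite presentation $\langle a_1,\dots,a_k\mid R\rangle$ of $\pi_1(X)$ and a finite generating set $\tau_1,\dots,\tau_m$ of $\Mod(X)$ (an explicit Lickorish/Humphries-type set of Dehn twists, together with finitely many elements permuting the marked points if $X$ has any). \emph{(2)} For each $\tau_j$, compute the induced automorphism $(\tau_j)_*$ of $\pi_1(X)$, i.e.\ express each $(\tau_j)_*(a_i)$ as a word in the $a_\ell$, using the standard formulas for the action of a Dehn twist on the fundamental group; record $(\tau_j^{-1})_*$ as well. \emph{(3)} Enumerate $\Sigma$: run over all $k$-tuples in $S_n^k$, keep those satisfying the relations $R$ (these are precisely the homomorphisms $\pi_1(X)\to S_n$), keep the transitive ones, and pass to simultaneous-conjugacy classes; this is a finite computation because $\pi_1(X)$ is finitely generated. \emph{(4)} Using step (2), compute the permutation of $\Sigma$ induced by each $\tau_j$ via $[\rho]\mapsto[\rho\circ(\tau_j^{-1})_*]$. \emph{(5)} Identify the vertex $[\rho_p]\in\Sigma$ coming from the given cover $p$, and perform a breadth-first search of its $\Mod(X)$-orbit using the generator permutations, recording for each newly reached class $c$ a word $w_c$ in $\tau_1^{\pm1},\dots,\tau_m^{\pm1}$ with $w_c\cdot[\rho_p]=c$. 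Since $\Sigma$ is finite this terminates, and by the bijection above $\{w_c\}$ is a complete, irredundant set of coset representatives of $\LMod(X)$ in $\Mod(X)$.

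The ingredients invoked here are genuinely routine: finite presentations of surface groups, explicit finite generating sets of mapping class groups, and the action of Dehn twists on $\pi_1$ are all completely standard, and step (5) is the usual Schreier-transversal procedure for a finite group action. The one place that requires care --- which I would isolate as a preliminary lemma --- is the identification of $\LMod(X)$ with the stabilizer of $[\rho_p]$: one must verify both that the $\Mod(X)$-action on $\Sigma$ is well defined (independence of basepoint choices and of the representative diffeomorphism in its isotopy class) and that liftability of a \emph{homeomorphism} is detected exactly by the $\pi_1$-level criterion, so that $\LMod(X)$ is not inadvertently enlarged by admitting continuous-but-non-smooth or non-invertible lifts. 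Granting this lemma, correctness and termination of the algorithm follow immediately from the finiteness of the set of index-$n$ subgroups of a finitely generated group.
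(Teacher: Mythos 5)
Your proposal is correct and follows essentially the same route as the paper: both identify $\LMod(X)$ as the stabilizer of the point determined by $p$ under the $\Mod(X)$-action on the finite set of index-$n$ subgroups of $\pi_1(X)$, and then extract coset representatives from the resulting finite orbit by a word-length/Schreier search over a finite generating set. The only real difference is bookkeeping: the paper works in the based mapping class group $\Mod(X,x_0)$ acting on actual subgroups and then applies the forgetful homomorphism, whereas you act directly on conjugacy classes of subgroups (outer automorphisms) so as to stay in $\Mod(X)$ throughout; both handle the basepoint issue correctly.
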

\begin{proof}
Fix a base point $x_0$ in $X$ and and consider it as a marked point, and choose $\widetilde{x}_0\in p^{-1}(x_0)$.  The mapping class group relative to the base point, denoted $\Mod(X,x_0)$, is the group of isotopy classes of diffeomorphisms of $X$ that fix $x_0$.  
We define $\LMod(X,x_0)$ to be the subgroup of $\Mod(X,x_0)$ consisting of mapping classes that have representatives that lift to $\Mod(\widetilde{X},\widetilde{x}_0)$.

Let $H_0=p_*(\pi_1(\widetilde{X},\widetilde{x}_0))$.  Then $H_0$ is an index $n$ subgroup of $\pi_1(X,x_0)$.  There is an action of $\Mod(X,x_0)$ on the set of index $n$ subgroups of $\pi_1(X,x_0)$; denote the orbit of $H_0$ by $\mathcal{H}$. The action of $\Mod(X,x_0)$ on the subgroups of $\pi_1(X,x_0)$ induces a map $P:\Mod(X,x_0)\rightarrow\Sigma_{\mathcal{H}}$.

We have two facts:
\begin{enumerate}
\item $\LMod(X,x_0)$ is the stabilizer of $H_0$ in $\Mod(X,x_0)$.
\item $\mathcal{H}$ is finite.
\end{enumerate}
The first follows from the lifting criterion for covering spaces and the fact that all isotopies of $X$ lift to isotopies of $\widetilde{X}$.  The second follows from the fact that there are finitely many index $n$ subgroups of any finitely generated group. 

To find coset representatives of $\LMod(X, x_0)$, it suffices to find a finite set $F\subset\Mod(X,x_0)$ such that for all $H\in\mathcal{H}$ there exists $f\in F$ such that $f\cdot H_0=H$.

Let $\{g_i\}$ be any finite generating set of $\Mod(X,x_0)$.  The set $\{P(g_i)\}$ generates a subgroup of $\Sigma_{\mathcal{H}}$.  Because $\Sigma_{\mathcal{H}}$ is finite, there is an element in Im$(P)$ whose word length $L$ with respect to $\{P(g_i)\}$ is maximal.
There is a finite set of elements $F$ in $\Mod(X,x_0)$ of word length at most $L$ with respect to $\{g_i\}$.  

By construction $P(F)=$ Im$(P)$.  In other words, any permutation of elements of $\mathcal{H}$ obtained by the action of $\Mod(X,x_0)$ has a preimage under $P$ in $F$.  Therefore, for all $H\in\mathcal{H}$, there is $f\in F$ such that $f\cdot H_0=H$.

The forgetful homomorphism $\mathcal{F}:\Mod(X,x_0)\rightarrow\Mod(X)$ is surjective.  Additionally, any diffeomorphism of $X$ that lifts to $\widetilde{X}$ can be modified by isotopy to fix $x_0$.  Since every isotopy of $X$ lifts to an isotopy of $\widetilde{X}$, the restriction of $\mathcal{F}$ to $\LMod(X,x_0)$ surjects onto $\LMod(X)$.  The image $\{\mathcal{F}(f_j)\}$ of the coset representatives of $\LMod(X,x_0)$ in $\Mod(X,x_0)$ is a set of coset representatives of the subgroup $\LMod(X)$ in $\Mod(X)$.
\end{proof}

We recall the notation that $X^{\circ}$ is the surface $X$ with the branch points removed and $\widetilde{X}^{\circ}=p^{-1}(X^{\circ})$.  Denote the set of isotopy classes of simple closed curves in a surface $S$ by $\mathcal{G}(S)$. Denote the set of isotopy classes of multicurves in $S$ by $\mathcal{M}(S)$. 

\begin{theorem}\label{checkingcl}
Let $p:\widetilde{X}\rightarrow X$ be an $n$-fold covering space of surfaces such that $\chi(\widetilde{X})<0$.  There is an algorithm to check whether or not $p$ has the weak curve lifting property.\end{theorem}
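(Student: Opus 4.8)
The plan is to turn the weak curve lifting property, which a priori concerns infinitely many curves, into a property of a computable finite set of curves. Two observations drive this. First, whether the preimage $p^{-1}(\gamma)$ of an essential simple closed curve $\gamma$ in $X$ has an essential connected component depends only on the $\LMod(X)$-orbit of $\gamma$: if $f\in\LMod(X)$ and $\widetilde f$ is a lift of a diffeomorphism representing $f$, then $\widetilde f$ carries the components of $p^{-1}(\gamma)$ bijectively onto those of $p^{-1}(f(\gamma))$, and being essential in $\widetilde X$ is invariant under the diffeomorphism $\widetilde f$; that $p^{-1}$ of an isotopy class is well defined up to isotopy in $\widetilde X$ holds because every isotopy of $X$ lifts to one of $\widetilde X$. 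Second, by the change of coordinates principle $\Mod(X)$ acts on isotopy classes of essential simple closed curves with only finitely many orbits, one for each topological type, and an explicit representative of each type can be written down from the genus of $X$ and the number of branch points. Since $\LMod(X)$ has finite index in $\Mod(X)$, it too has only finitely many orbits of essential simple closed curves.

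To make this effective, I would first replace the (possibly branched) cover $p$ by the unbranched cover $p^{\circ}\colon\widetilde X^{\circ}\to X^{\circ}$, using the natural identifications $\Mod(X)\cong\Mod(X^{\circ})$ and $\LMod(X)\cong\LMod(X^{\circ})$: a lift to $\widetilde X^{\circ}$ of a diffeomorphism of $X^{\circ}$ automatically extends over the punctures $p^{-1}(B)$ because a homeomorphism of a punctured surface permutes its ends and filling in these ends is functorial, and a simple closed curve in $X$ is essential in $X$ if and only if it is essential in $X^{\circ}$. Proposition \ref{cosets}, applied to $p^{\circ}$, then produces algorithmically a finite set $F\subset\Mod(X)$ with $\Mod(X)=\bigcup_{f\in F}\LMod(X)f$ (replacing each representative by its inverse if the proposition is phrased for the other side). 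Choosing representatives $\gamma_{1},\dots,\gamma_{m}$, one for each topological type of essential simple closed curve in $X$ (each automatically disjoint from $B$), set $C=\{\,f(\gamma_{j}): f\in F,\ 1\le j\le m\,\}$. Every essential simple closed curve $\gamma$ in $X$ then lies in the $\LMod(X)$-orbit of an element of $C$: write $\gamma=h(\gamma_{j})$ with $h\in\Mod(X)$ and then $h=g f$ with $g\in\LMod(X)$ and $f\in F$, so $\gamma=g\bigl(f(\gamma_{j})\bigr)$. Combining this with the first observation, $p$ has the weak curve lifting property if and only if $p^{-1}(\gamma)$ has an essential connected component for every $\gamma$ in the finite set $C$.

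It remains to see that for a single explicit essential simple closed curve $\gamma\subset X^{\circ}$ one can decide algorithmically whether $p^{-1}(\gamma)$ has a connected component that is essential in $\widetilde X$. Presenting the cover by its monodromy $\rho\colon\pi_{1}(X^{\circ},x_{0})\to\Sigma_{n}$, equivalently by an explicit cell structure on $\widetilde X^{\circ}$ refining a cell structure on $X^{\circ}$ containing $\gamma$, the components of $p^{-1}(\gamma)$ correspond to the cycles of $\rho\bigl([\gamma]\bigr)$, and each component $\widetilde\gamma_{i}$ is then an explicit simple closed curve in the cell complex $\widetilde X^{\circ}$. Whether $\widetilde\gamma_{i}$ is essential in $\widetilde X$ is a finite computation: if $\widetilde\gamma_{i}$ is nonseparating in $\widetilde X^{\circ}$ it is essential, while if it is separating it is inessential in $\widetilde X$ exactly when one of the two complementary subsurfaces becomes a disk, a once-punctured disk, or a boundary-parallel annulus after the punctures lying over $B$ are filled in, a condition read off from the cell structure. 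The algorithm outputs ``yes'' precisely when, for every $\gamma\in C$, some component of $p^{-1}(\gamma)$ passes this test; by the previous paragraph this is correct.

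The step I expect to demand the most care is the combinatorial-topology bookkeeping bridging the two descriptions of the cover: justifying the identifications $\Mod(X)\cong\Mod(X^{\circ})$ and $\LMod(X)\cong\LMod(X^{\circ})$ so that Proposition \ref{cosets} applies, and translating ``$\widetilde\gamma_{i}$ is essential in the compact branched surface $\widetilde X$'' into an explicitly decidable statement about the unbranched cover $\widetilde X^{\circ}$ together with its marked set $p^{-1}(B)$. The conceptual point, and the reason Proposition \ref{cosets} is needed rather than merely one curve per topological type, is that the weak curve lifting property is $\LMod(X)$-invariant but emphatically not $\Mod(X)$-invariant, as the simple cover of Figure \ref{fullerp} already illustrates.
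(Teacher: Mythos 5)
Your proposal is correct and follows essentially the same route as the paper: reduce to the finite set $\{h(\gamma_j)\}$ obtained by combining representatives of the finitely many $\Mod(X)$-orbits of curves with the coset representatives of $\LMod(X)$ from Proposition \ref{cosets} applied to the unbranched cover $\widetilde X^{\circ}\to X^{\circ}$, then test each preimage. You additionally spell out details the paper leaves implicit (the identifications $\Mod(X)\cong\Mod(X^{\circ})$, $\LMod(X)\cong\LMod(X^{\circ})$, and the explicit decidability of essentiality of a preimage component via the monodromy and a cell structure), which is a welcome but not substantively different elaboration.
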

\begin{proof}
The embedding $X^{\circ}\rightarrow X$ takes simple closed curves in $X^{\circ}$ to simple closed curves in $X$.   Then there is a sequence of maps as follows; we denote their composition by $\Psi$:
\begin{displaymath}
\Psi:\xymatrix{\mathcal{G}(X)\ar@{->}[r]^{\cong}&\mathcal{G}(X^{\circ})\ar@{->}[r]^{\text{preimage}}&\mathcal{M}(\widetilde{X}^{\circ})\ar@{->}[r]^{\text{fill in}} &\mathcal{M}(\widetilde{X})}
\end{displaymath}

The second map takes a simple closed curve in $X^{\circ}$ to its preimage in $\widetilde{X}^{\circ}$, and the third map takes multicurves in the one induced by the inclusion $\widetilde{X}^{\circ}\rightarrow\widetilde{X}$.

If $c,c'\in\mathcal{G}(X)$ differ only by an element of $\LMod(X)$, then $\Psi(c)$ and $\Psi(c')$ have the same number of essential and inessential components.  Therefore to check the weak curve lifting property, it is sufficient to compute $\Psi(c)$ for a set of representatives of $\mathcal{G}(X)/\LMod(X)$.

The set $\mathcal{G}(X)/\Mod(X)$ is finite \cite[Section 1.3]{primer}.  Let $\{\gamma_i\}$ be the set of representatives of the quotient.

By Proposition \ref{cosets}, we can choose a finite set of coset representatives $\{h_j\}$ of $\LMod(X^{\circ})$ in $\Mod(X^{\circ})$. 
Therefore it is sufficient to check the weak curve lifting property by calculating the image under $\Psi$ of the isotopy classes of simple closed curves in the set $\{h_j(\gamma_i)\}$.
\end{proof}

\section{Proof of Theorem \ref{ramified}}\label{main}
In this section we will prove Theorem \ref{ramified}.  As an intermediate step we define the isotopy projection property and prove it implies the Birman--Hilden property.
\subsection{Isotopy Projection Property}\label{isotopyprojection}
We say a covering space of surfaces $p:\widetilde{X}\rightarrow X$ has the {\it isotopy projection property} if the following holds: 
\begin{quote} For any pair of simple closed curves $\alpha$ and $\beta$ in $X$ such that $p^{-1}(\alpha)$ and $p^{-1}(\beta)$ are isotopic multicurves, we have that $\alpha$ and $\beta$ are isotopic.\end{quote}

Recall that branch points in $X$ are marked points, but their preimages in $\widetilde{X}$ are not marked.  Therefore homotopies in $\widetilde{X}$ may not project to homotopies in $X$ that fix the set of marked points.

We will use the fact that the isotopy projection property implies the Birman--Hilden property to prove Theorem \ref{ramified}.
\begin{proposition}\label{curveisotopy}
 Let $p:\widetilde{X}\rightarrow X$ be a finite covering space of surfaces where $\chi(\widetilde{X})<0$.  The covering space $p$ has the isotopy projection property if and only if it has the Birman--Hilden property.
\end{proposition}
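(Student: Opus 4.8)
The plan is to prove the two implications separately; the forward implication (isotopy projection $\Rightarrow$ Birman--Hilden) is the one that feeds into Theorem \ref{ramified} and carries essentially all the weight. Since $\Phi\colon\LMod(X)\to\SMod(\widetilde{X})/\Deck(p)$ is already known to be a well-defined surjective homomorphism, the Birman--Hilden property is equivalent to $\ker\Phi$ being trivial, and this is what I would prove.

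\emph{Isotopy projection implies Birman--Hilden.} Let $f\in\ker\Phi$ be represented by a diffeomorphism $\phi$ of $X$. Because $\Phi(f)$ is trivial modulo $\Deck(p)$, one may choose the lift of $\phi$ to be a diffeomorphism $\widetilde{\phi}$ of $\widetilde{X}$ with $\widetilde{\phi}$ isotopic to $\mathrm{id}_{\widetilde{X}}$. The point is that lifting commutes with taking preimages of curves: for any essential simple closed curve $\alpha$ in $X$ one has $p^{-1}(\phi(\alpha))=\widetilde{\phi}(p^{-1}(\alpha))$, and since $\widetilde{\phi}\simeq\mathrm{id}_{\widetilde{X}}$ this multicurve is isotopic to $p^{-1}(\alpha)$. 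Applying the isotopy projection property to the pair $\phi(\alpha),\alpha$ then shows $\phi(\alpha)\simeq\alpha$ in $X$. Thus $\phi$ fixes the isotopy class of every essential simple closed curve in $X$. For all but finitely many sporadic surfaces $X$, the Alexander method (equivalently, the faithfulness of the action of $\Mod(X)$ on the curve complex) now gives $\phi\simeq\mathrm{id}_{X}$. For the sporadic surfaces the subgroup of $\Mod(X)$ acting trivially on essential simple closed curves is finite, so we may take $\phi$ to be a finite-order diffeomorphism; then any lift $\widetilde{\phi}$ is itself finite order (a power lies in the finite group $\Deck(p)$) and normalizes $\Deck(p)$, so $\langle\widetilde{\phi}\rangle\cdot\Deck(p)$ is a finite subgroup of $\Homeo(\widetilde{X})$. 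Since $\chi(\widetilde{X})<0$ this finite subgroup embeds in $\Mod(\widetilde{X})$; as $\widetilde{\phi}$ covers $\phi\neq\mathrm{id}_X$ it differs as a homeomorphism from every deck transformation, hence its mapping class is distinct from that of every deck transformation, contradicting $f\in\ker\Phi$. So $f$ is trivial and $\Phi$ is an isomorphism.

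\emph{Birman--Hilden implies isotopy projection.} I would argue the contrapositive. Suppose there are non-isotopic essential simple closed curves $\alpha,\beta$ in $X$ with $p^{-1}(\alpha)\simeq p^{-1}(\beta)$; the goal is a nontrivial element of $\ker\Phi$. If $p^{-1}(\alpha)$ has no essential component then $p$ fails the weak curve lifting property, and Proposition \ref{necessity} already shows $p$ fails the Birman--Hilden property. Otherwise, choose $k$ with $T_{\alpha}^{k},T_{\beta}^{k}\in\LMod(X)$; the preferred lifts of these powers are multitwists supported on the components of $p^{-1}(\alpha)$ and $p^{-1}(\beta)$ respectively, and the isotopy carrying $p^{-1}(\alpha)$ to $p^{-1}(\beta)$ shows that these lifts represent the same class of $\SMod(\widetilde{X})$ modulo $\Deck(p)$, so $\Phi(T_{\alpha}^{k})=\Phi(T_{\beta}^{k})$. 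Since $\alpha$ and $\beta$ are essential and non-isotopic, $T_{\alpha}^{k}T_{\beta}^{-k}$ is a nontrivial element of $\Mod(X)$ lying in $\ker\Phi$.

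\emph{Where the difficulty lies.} The heart of the matter is the forward implication, and within it the passage from ``$\widetilde{\phi}$ is isotopic to the identity'' to ``$\phi$ fixes every essential curve class'': one must exploit that a lift, unlike an arbitrary diffeomorphism of $\widetilde{X}$, is compatible with $p^{-1}$ of curves, even though isotopies in $\widetilde{X}$ need not descend to isotopies in $X$ fixing the marked points. The remaining subtlety is the handling of the sporadic surfaces, where fixing all essential curve classes is not by itself enough to force triviality and the finite-order argument above is needed; an analogous care is required in the converse to see that the lifts of the twist powers genuinely agree modulo deck transformations, the weak curve lifting reduction being exactly what removes the bad case.
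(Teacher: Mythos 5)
Your forward implication (isotopy projection $\Rightarrow$ Birman--Hilden) is correct and is the only direction the paper actually proves and uses; it reaches the same key step as the paper (a representative $\varphi$ of a class in $\ker\Phi$ fixes the isotopy class of every essential simple closed curve, via $p^{-1}(\varphi(\alpha))=\widetilde{\varphi}(p^{-1}(\alpha))\simeq p^{-1}(\alpha)$ and the isotopy projection property) but then finishes differently. The paper invokes the Casson--Bleiler criterion to conclude $\varphi$ is finite order and then shows separately (Proposition \ref{tf}) that the kernel of $\Phi$ is torsion free, using Fenchel--Nielsen realization plus the fact that a finite-order diffeomorphism of a surface with $\chi<0$ isotopic to the identity is the identity. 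You instead quote faithfulness of the action on curves (Alexander method) for non-sporadic $X$ and handle the sporadic surfaces by essentially the same finite-order argument as Proposition \ref{tf}. The two routes are equivalent in substance; the paper's use of Casson--Bleiler packages your sporadic-case analysis into one citation, while your version makes explicit where the low-complexity exceptions live. Both treatments, yours and the paper's, lean on realizing a finite-order mapping class of $X$ by a finite-order diffeomorphism.

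The converse, which the paper explicitly leaves as an unused exercise, has a genuine gap as you have written it. The preferred lift of $T_{\alpha}^{k}$ is the multitwist $\prod_i T_{\widetilde{\alpha}_i}^{k/d_i}$, where $d_i$ is the degree of the covering $\widetilde{\alpha}_i\rightarrow\alpha$ of circles, and similarly for $\beta$ with degrees $e_j$. An ambient isotopy of $\widetilde{X}$ carrying $p^{-1}(\alpha)$ to $p^{-1}(\beta)$ matches up components but gives no control on these degrees, so the two lifted multitwists need not agree (even modulo $\Deck(p)$): the exponents $k/d_i$ and $k/e_{\sigma(i)}$ on matched components can differ, and by uniqueness of multitwist normal forms the classes are then distinct. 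One cannot argue that the degrees must match via $\pi_1$, since the relevant curves may become inessential or imprimitive once the branch points are forgotten --- which is exactly the situation the converse is about. Your reduction via the weak curve lifting property removes only the case where $p^{-1}(\alpha)$ is entirely inessential, not this one. Since the paper neither proves nor uses this direction, this does not affect the role of the proposition in the paper, but the step ``these lifts represent the same class modulo $\Deck(p)$'' needs justification or a different witness for a nontrivial kernel element.
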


Below we prove the forward direction of Proposition \ref{curveisotopy}, namely, that the isotopy projection property implies the Birman--Hilden property.  The other direction is left as an exercise, and is not used in this paper.  

\p{Bigons} A {\it bigon} is a cell structure on a disk with one 2-cell, two 1-cells, and two 0-cells.  A bigon in a surface $X$ is an embedding of the cell complex into $X$.  We call the 1-cells {\it edges} of the bigon and the 0-cells {\it vertices}.  We say a bigon is bounded by two curves $\alpha$ and $\beta$ if the edges of the bigon are contained in $\alpha$ and $\beta$ respectively.  Let $B$ be a bigon in $X$ bounded by $\alpha$ and $\beta$.  We let $B_{\alpha}$ and $B_{\beta}$ be the edges of $B$ contained in $\alpha$ and $\beta$.  An {\it innermost bigon} is a bigon bounded by $\alpha$ and $\beta$ that does not contain any other bigons bounded by $\alpha$ and $\beta$.

\p{Finite order elements} Before completing the proof of Proposition \ref{curveisotopy}, we prove the following lemma about finite order elements.
\begin{proposition}\label{tf}
  Let $p:\widetilde{X}\rightarrow X$ be a finite covering space of surfaces where $\chi(\widetilde{X})<0$.  If $p$ has the isotopy projection property, then the kernel of $\Phi:\LMod(X)\rightarrow \SMod(\widetilde{X})/\Deck(p)$ is torsion free.
\end{proposition}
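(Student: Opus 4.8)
The plan is to show that any nontrivial finite-order element $f \in \ker \Phi$ leads to a contradiction with the isotopy projection property. Suppose $f \in \LMod(X)$ is a nontrivial mapping class of finite order $m > 1$ lying in $\ker \Phi$. Since $f$ has finite order, by the Nielsen realization theorem (or Kerckhoff) we may choose a representative diffeomorphism $\phi: X \to X$ that is itself of order $m$; equivalently, $\langle \phi \rangle$ generates a cyclic group of diffeomorphisms of $X$, so the quotient map $X \to X/\langle \phi \rangle$ is a (possibly branched) cover. The key point is that $\phi$, being of finite order and not isotopic to the identity, cannot act trivially: a nontrivial finite-order diffeomorphism moves some isotopy class of simple closed curve. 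So there is a simple closed curve $\alpha$ in $X$ with $\phi(\alpha)$ not isotopic to $\alpha$. Set $\beta = \phi(\alpha)$.

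Now I would use that $f \in \ker \Phi$, which means the preferred lift $\widetilde{\phi}$ of $\phi$ to $\widetilde{X}$ is isotopic to a deck transformation $d \in \Deck(p)$. First I would pass to the lift: $\widetilde{\phi}$ is a symmetric diffeomorphism of $\widetilde{X}$ lifting $\phi$, and since $\phi$ has finite order we can arrange (by choosing the lift compatibly, using that $\Deck(p)$ is finite and the lifting is canonical up to deck transformations when $\chi(\widetilde X) < 0$) that $\widetilde{\phi}$ also has finite order. The relation $f \in \ker\Phi$ gives that $\widetilde{\phi}$ and $d$ represent the same class in $\SMod(\widetilde{X})$, i.e. $\widetilde{\phi} \simeq d$. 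Since a deck transformation sends the preimage $p^{-1}(\alpha)$ to itself up to isotopy — indeed $d$ permutes the components of $p^{-1}(\alpha)$, so $d(p^{-1}(\alpha))$ is isotopic to $p^{-1}(\alpha)$ as a multicurve — and since $\widetilde\phi(p^{-1}(\alpha)) = p^{-1}(\phi(\alpha)) = p^{-1}(\beta)$, we conclude that $p^{-1}(\alpha)$ and $p^{-1}(\beta)$ are isotopic multicurves in $\widetilde{X}$. Here I must be careful that $\widetilde\phi$ really carries $p^{-1}(\alpha)$ to $p^{-1}(\beta)$ as sets, which is immediate from $p \circ \widetilde\phi = \phi \circ p$, and that isotopic diffeomorphisms of $\widetilde X$ carry a multicurve to isotopic multicurves.

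At this point the isotopy projection property applies directly: $p^{-1}(\alpha) \simeq p^{-1}(\beta)$ forces $\alpha \simeq \beta = \phi(\alpha)$ in $X$, contradicting our choice of $\alpha$. Hence no such nontrivial finite-order $f$ exists, i.e. $\ker\Phi$ is torsion free.

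The main obstacle I anticipate is the careful handling of lifts and isotopies at the level of diffeomorphisms versus mapping classes: I need to know that the preferred lift $\widetilde\phi$ can be taken of finite order (this uses $\chi(\widetilde X) < 0$ so that the lift is unique up to $\Deck(p)$ and the whole configuration is governed by a finite group action, via \cite[Proposition 7.7]{primer}), and that ``$\widetilde\phi$ isotopic to a deck transformation'' combined with ``deck transformations preserve $p^{-1}(\alpha)$ up to isotopy'' really does yield $p^{-1}(\alpha) \simeq p^{-1}(\phi(\alpha))$ as multicurves — the subtlety being that isotopies in $\widetilde X$ need not be fiber-preserving, but that is harmless since the isotopy projection property is stated precisely for arbitrary (non-symmetric) isotopies of multicurves. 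Once these bookkeeping points are nailed down, the contradiction with the isotopy projection property is essentially immediate.
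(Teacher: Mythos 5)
Your strategy---produce a simple closed curve whose isotopy class is moved by the finite-order representative and then contradict the isotopy projection property---has a genuine gap at its key step. The claim that a nontrivial finite-order diffeomorphism of $X$ must move the isotopy class of some essential simple closed curve is false in general. The hypotheses here constrain only $\widetilde{X}$ (via $\chi(\widetilde{X})<0$); the base $X$ may be, for example, a sphere with three branch points, which has no essential simple closed curves at all (so there is no $\alpha$ to choose, while $\Mod(X)\cong\Sigma_3$ consists entirely of torsion), or a sphere with four marked points, a torus with one or two marked points, or a closed genus-two surface, on each of which a hyperelliptic involution is a nontrivial finite-order mapping class that fixes the isotopy class of every simple closed curve. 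Worse, these exceptional cases are precisely where the proposition carries its weight: in the paper it is invoked inside the proof of Proposition \ref{curveisotopy} for an element of $\ker\Phi$ that has \emph{already} been shown to fix every curve class, so an argument that begins by producing a moved curve cannot do the job it is needed for.

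The paper's proof avoids curves entirely and works upstairs, where the Euler characteristic hypothesis actually lives: realize $f$ of order $k$ by a diffeomorphism $\varphi$ with $\varphi^k=\mathrm{id}_X$ (Fenchel--Nielsen), take the lift $\widetilde{\varphi}$ isotopic to $\mathrm{id}_{\widetilde{X}}$, note that $\widetilde{\varphi}^k$ is a deck transformation so $\widetilde{\varphi}$ has finite order, and use that on a surface with $\chi<0$ the only finite-order diffeomorphism isotopic to the identity is the identity; hence $\widetilde{\varphi}=\mathrm{id}_{\widetilde{X}}$ and therefore $\varphi=\mathrm{id}_X$. Notably this argument never uses the isotopy projection property. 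Your bookkeeping about lifts, deck transformations, and non-symmetric isotopies of multicurves is fine, but to salvage your route you would still need a separate argument of the paper's kind for the exceptional bases, at which point you have reproved the proposition.
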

\begin{proof}
Let $f$ be a mapping class of order $k$ in the kernel of $\Phi$.  By a classical theorem of Fenchel and Nielsen \cite[Theorem 7.1]{primer}, there is a diffeomorphism $\varphi$ of order $k$ that represents $f$.  There is a lift $\widetilde{\phi}:\widetilde{X}\rightarrow\widetilde{X}$ of the diffeomorphism $\varphi$ that is isotopic to the identity in $\widetilde{X}$.  Then $\varphi^k$ is the identity on $X$ and lifts to $\widetilde{\varphi}^k$, which is a deck transformation.  Thus $\widetilde{\varphi}$ is a finite-order diffeomorphism of $\widetilde{X}$.  Since $\chi(\widetilde{X})<0$, the only finite-order diffeomorphism of $\widetilde{X}$ that is isotopic to $id_{\widetilde{X}}$ is $id_{\widetilde{X}}$ itself.  Therefore, $\widetilde{\varphi}$ is exactly $id_{\widetilde{X}}$, so $\varphi$ is exactly $id_X$.
\end{proof}

\p{Finishing the proof} We are now ready to prove that the isotopy projection property implies the Birman--Hilden property.  
\begin{theorem}[Casson--Bleiler]\label{cb}
 Let $S$ be a surface, with $\chi(S)<0$.  Let $\phi$ be a diffeomorphism of $S$.  If for every essential simple closed curve $\gamma$, there is some $k>0$ such that $\phi^k(\gamma)$ is isotopic to $\gamma$, then $\phi$ is finite order.
\end{theorem}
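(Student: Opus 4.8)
The plan is to derive the statement from the Nielsen--Thurston classification of mapping classes. I would reduce to showing that the mapping class $[\phi]\in\Mod(S)$ is periodic; granting this, $[\phi]$ is represented by a finite-order diffeomorphism by the Fenchel--Nielsen realization theorem \cite[Theorem 7.1]{primer}, which gives the conclusion. Note that the hypothesis passes to powers: if $\phi^k(\gamma)$ is isotopic to $\gamma$, then so is $(\phi^m)^k(\gamma)$, so we may freely replace $\phi$ by any power. By the Nielsen--Thurston classification, $[\phi]$ is periodic, reducible, or pseudo-Anosov, and it suffices to rule out the latter two cases.

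Suppose first that $[\phi]$ is pseudo-Anosov. Then for every essential simple closed curve $\gamma$ the geometric intersection numbers $i(\phi^k(\gamma),\gamma)$ tend to infinity as $k\to\infty$, since the curves $\phi^k(\gamma)$ converge projectively to the unstable foliation, which fills $S$; in particular $\phi^k(\gamma)$ is not isotopic to $\gamma$ for any $k>0$, contradicting the hypothesis. Suppose instead that $[\phi]$ is reducible, and let $\Gamma$ be its canonical reduction system. After replacing $\phi$ by a power I may assume that $\phi$ fixes the isotopy class of each component of $\Gamma$ and of each component $S_j$ of the surface obtained by cutting $S$ along $\Gamma$, and that on each $S_j$ it induces a mapping class $\phi_j$ that is periodic or pseudo-Anosov. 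If some $\phi_j$ is pseudo-Anosov, I would choose a simple closed curve $\gamma$ lying in the interior of $S_j$ and essential there; such $\gamma$ is essential in $S$, and $i(\phi^k(\gamma),\gamma)$, which may be computed inside $S_j$, tends to infinity as before, a contradiction. Hence every $\phi_j$ is periodic, and after a further power $\phi$ restricts to the identity on each $S_j$; then $\phi$ is a multitwist $\prod_i T_{\alpha_i}^{n_i}$ supported on $\Gamma$. If this multitwist is nontrivial, i.e. some $n_i\neq0$, I would pick an essential simple closed curve $\delta$ with $i(\delta,\alpha_i)>0$ for such an $i$; the standard estimate for intersection numbers under (multi)twists then gives $i(\phi^k(\delta),\delta)\to\infty$, again a contradiction. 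Therefore the multitwist is trivial, $\phi$ is isotopic to $id_S$, and in particular $[\phi]$ is periodic.

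The main obstacle I anticipate is the bookkeeping of the reducible case: invoking the canonical reduction system correctly so as to know the complementary pieces support only periodic or pseudo-Anosov maps, tracking which power of $\phi$ has been passed to at each stage, and in every subcase producing an explicit essential simple closed curve whose intersection numbers with its $\phi$-images are unbounded. The two growth facts this relies on --- that $i(\phi^k(\gamma),\gamma)\to\infty$ for a pseudo-Anosov $\phi$ and every essential $\gamma$, and that $i(T_\alpha^k(\delta),\delta)$ grows (linearly) in $k$ whenever $i(\delta,\alpha)>0$ --- are standard and would be cited from \cite{primer}. The only surfaces with $\chi(S)<0$ left untouched by this argument are degenerate low-complexity cases such as the thrice-punctured sphere, which has no essential simple closed curves (so the hypothesis is vacuous) and finite mapping class group; these are checked directly.
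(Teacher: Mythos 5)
The paper does not actually prove this statement---it simply cites Casson--Bleiler---and your Nielsen--Thurston argument is essentially the standard proof from that source, carried out correctly: the pseudo-Anosov case and the nontrivial-multitwist case are excluded by intersection-number growth (using that a single fixed isotopy class $\phi^k(\gamma)\simeq\gamma$ forces $i(\phi^{mk}(\gamma),\gamma)=0$ for all $m$), and the canonical reduction system handles the reducible case. The only small caveat is that the conclusion should be read as ``$\phi$ is isotopic to a finite-order diffeomorphism,'' i.e.\ $[\phi]$ is periodic in $\Mod(S)$, which is exactly what you prove and exactly how the paper uses the theorem.
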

A proof is given in \cite{cassonbleiler}.

\begin{proof}[Proof of Proposition \ref{curveisotopy}]
Suppose $p:\widetilde{X}\rightarrow X$ has the isotopy projection property.  We want to show that the map $\Phi:\LMod(X)\rightarrow \SMod(\widetilde{X})/\Deck(p)$ is injective.

Fix an essential simple closed curve $\gamma$ in $X$.  Let $\widetilde{\gamma}=p^{-1}(\gamma)$ in $\widetilde{X}$.

Let $\varphi$ be a diffeomorphism of $X$ whose class is in ker$\Phi$.  We need to show that $\varphi$ is isotopic to $id_X$.

Since $\widetilde{\varphi}$ is isotopic to $id_{\widetilde{X}}$, the multicurve $\widetilde{\varphi}(\widetilde{\gamma})$ is isotopic to $\widetilde{\gamma}$. Because $p$ has the isotopy projection property, $\gamma$ is isotopic to $\varphi(\gamma)$.  Since this is true for any essential simple closed curve $\gamma$, Theorem \ref{cb} implies $\varphi$ is finite order.

By Proposition \ref{tf}, $\varphi$ is isotopic to the identity.
\end{proof}

\subsection{Property NU}\label{maintheorems}
Now we prove Theorem \ref{ramified}. We recall that a cover has property NU if it has no unramified preimages of branch points.  Theorem \ref{ramified} states that covers with NU have the Birman--Hilden property.

To prove Theorem \ref{ramified}, we prove that coverings spaces of surfaces with property NU have the isotopy projection property.  

We fix the following notation: $p:\widetilde{X}\rightarrow X$ is a finite branched covering space of surfaces where $\chi(\widetilde{X})<0$, $\alpha$ and $\beta$ are transverse simple closed curves in $X$, and $\widetilde{\alpha}$ and $\widetilde{\beta}$ are the multicurves $p^{-1}(\alpha)$ and $p^{-1}(\beta)$ in $\widetilde{X}$. Also, $\bar{X}$ is the surface obtained from $X$ by forgetting that the branch points are marked.

Before we prove Theorem \ref{ramified}, we prove Lemmas \ref{bigons} and \ref{annuli}, which roughly state that bigons in $\widetilde{X}$ project to bigons in $X$ and disjoint isotopic curves in $\widetilde{X}$ project to disjoint isotopic curves in $X$.

We will repeatedly use the following basic fact about covering spaces: if $C$ is any subset of $X$, then $p|_{p^{-1}(C)}$ is a covering space.
\begin{lemma}\label{bigons}
Let $\widetilde{B}$ be an innermost bigon bounded by $\widetilde{\alpha}$ and $\widetilde{\beta}$  in $\widetilde{X}$.  Then $p(\widetilde{B})$ is an innermost bigon bounded by $\alpha$ and $\beta$ in $\bar{X}$.  Additionally, $\widetilde{B}$ does not contain any ramified points.
\end{lemma}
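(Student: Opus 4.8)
The plan is to analyze $\widetilde{B}$ locally and show its image is again an embedded bigon, then use the covering-space property together with NU to rule out ramification inside it. First I would observe that $\widetilde{B}$ is a disk whose two edges $\widetilde{B}_{\widetilde\alpha}$ and $\widetilde{B}_{\widetilde\beta}$ lie on components of $\widetilde\alpha$ and $\widetilde\beta$ respectively, meeting at two vertices where $\widetilde\alpha$ and $\widetilde\beta$ cross transversely. Since $\widetilde\alpha=p^{-1}(\alpha)$ and $\widetilde\beta=p^{-1}(\beta)$, the map $p$ sends $\widetilde{B}_{\widetilde\alpha}$ into $\alpha$ and $\widetilde{B}_{\widetilde\beta}$ into $\beta$, and the two vertices map to points of $\alpha\cap\beta$. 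So $p(\widetilde{B})$ is, at least, a continuous image of a disk whose boundary alternates between an arc of $\alpha$ and an arc of $\beta$; the task is to upgrade ``continuous image'' to ``embedded innermost bigon.''

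The key step is to control the interior of $\widetilde{B}$. I would argue that $p$ restricted to the open disk $\operatorname{int}(\widetilde B)$ is injective and avoids the branch locus. The heart of this is the following: suppose a point $\widetilde x$ in the interior or on the edges of $\widetilde{B}$ is a ramified preimage of a branch point $x\in B$. Take a standard neighborhood $U$ of $x$; then the component $V$ of $p^{-1}(U)$ containing $\widetilde x$ is a disk on which $p$ is an $m$-fold branched cover with $m>1$. Since $\alpha$ and $\beta$ are curves in $X^\circ$ (they miss the branch points), $\widetilde\alpha$ and $\widetilde\beta$ miss $p^{-1}(B)$, so $\widetilde x$ is not on the edges of $\widetilde B$; thus $\widetilde x$ is interior. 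Now $p^{-1}(\alpha)\cap V$ and $p^{-1}(\beta)\cap V$ each consist of $m>1$ strands through $V$ (the preimages of $\alpha\cap U$ and $\beta\cap U$, which are arcs), arranged cyclically around $\widetilde x$; chasing how the edge arcs $\widetilde B_{\widetilde\alpha}$ and $\widetilde B_{\widetilde\beta}$ enter and exit $V$, the extra strands of $\widetilde\alpha$ or $\widetilde\beta$ crossing $V$ must cut $\widetilde B$ and produce a strictly smaller bigon bounded by $\widetilde\alpha$ and $\widetilde\beta$ inside $\widetilde B$, contradicting innermost-ness. (Even more simply: since $p|_{p^{-1}(X^\circ)}$ is an honest covering map, $p$ is a local homeomorphism away from $p^{-1}(B)$, so if $\widetilde B$ contained no ramified point then $p|_{\widetilde B}$ would be a local homeomorphism on a compact disk, hence a covering of its image, and since $\widetilde B$ is simply connected one deduces $p|_{\operatorname{int}\widetilde B}$ is injective.) Having established that $\widetilde B$ contains no ramified point, $p|_{\widetilde B}$ is a local homeomorphism, and I would then promote this to an embedding: a locally injective map from a closed disk whose boundary maps to an embedded bigon boundary in $\bar X$ (the arcs of $\alpha$ and $\beta$ are embedded, and meet only at the two endpoints because the bigon vertices map to distinct intersection points) is itself an embedding, so $p(\widetilde B)$ is an embedded bigon bounded by $\alpha$ and $\beta$ in $\bar X$.

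Finally I would check \emph{innermost}. If $p(\widetilde B)$ strictly contained another bigon $B'$ bounded by $\alpha$ and $\beta$, then since $p|_{\widetilde B}$ is a homeomorphism onto its image, $(p|_{\widetilde B})^{-1}(B')$ would be a bigon bounded by $\widetilde\alpha$ and $\widetilde\beta$ strictly inside $\widetilde B$ --- one must only verify that the preimages of the edges of $B'$ are genuinely sub-arcs of $\widetilde\alpha,\widetilde\beta$, which holds because $p^{-1}(\alpha)=\widetilde\alpha$ and $p^{-1}(\beta)=\widetilde\beta$ --- contradicting that $\widetilde B$ is innermost. Hence $p(\widetilde B)$ is innermost.

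\textbf{Main obstacle.} The delicate point is the local analysis at a hypothetical ramified interior point: one must carefully argue that the multiple strands of $\widetilde\alpha$ (and $\widetilde\beta$) forced through the branched chart $V$ genuinely create a smaller bigon rather than escaping $\widetilde B$, i.e.\ that innermost-ness is actually violated. This requires a clean picture of how the edge arcs of $\widetilde B$ sit inside $V$ and how the cyclic arrangement of the $m$ preimage strands interacts with them. Everything else --- that boundary arcs avoid $p^{-1}(B)$, that a locally injective disk map with embedded boundary is an embedding, and the innermost transfer --- is routine point-set topology of surfaces.
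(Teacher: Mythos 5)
Your central step---ruling out a ramified point in the interior of $\widetilde{B}$ by a local analysis in a branched chart---has a genuine gap, and it is exactly the point you flag as the ``main obstacle.'' If $\widetilde{x}\in\operatorname{int}\widetilde{B}$ is a ramified preimage of a branch point $x$, the standard neighborhood $U$ of $x$ can (and typically should) be chosen small enough to be disjoint from $\alpha\cup\beta$, since $\alpha$ and $\beta$ live in $X^{\circ}$ and $x$ is an isolated marked point. In that case $p^{-1}(\alpha)\cap V$ and $p^{-1}(\beta)\cap V$ are empty: there are no ``$m>1$ strands'' forced through $V$, no curve is forced to cut $\widetilde{B}$, and no smaller bigon is produced. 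So innermost-ness is not violated locally, and the contradiction you are after never materializes. Your parenthetical alternative is circular for this purpose, since it assumes the absence of ramified points to conclude $p|_{\widetilde{B}}$ is a local homeomorphism. There are also smaller unjustified assertions (that the two vertices of $\widetilde{B}$ map to distinct points of $\alpha\cap\beta$, and that each edge maps injectively onto an embedded arc), which need the opposite signs of intersection at the two vertices and the fact that innermost-ness forces the interior of $\widetilde{B}_{\widetilde{\alpha}}$ into $\alpha\setminus\beta$.

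The fix is to reverse the order of the argument and make it global, which is what the paper does. First prove $p|_{\partial\widetilde{B}}$ is injective (vertices go to distinct points because the intersection signs are opposite; edges are injective immersions into a single complementary arc of $\alpha\setminus\beta$, resp.\ $\beta\setminus\alpha$, by innermost-ness). This already gives that $p(\widetilde{B})$ is an innermost bigon $B$ in $\bar{X}$. Then show $\widetilde{B}$ is an entire connected component of $p^{-1}(B)$, so $p|_{\widetilde{B}}:\widetilde{B}\to B$ is a (possibly branched) covering; its degree can be read off on the boundary, where it is $1$. A degree-one branched cover is unbranched, so $\widetilde{B}$ contains no ramified points. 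No local analysis at a hypothetical ramified point is needed, and note that the statement holds for arbitrary branched covers---property NU is not used in this lemma at all.
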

 Lemma \ref{bigons} implies in particular that if a cover has NU then innermost bigons in $\widetilde{X}$ do not contain preimages of branch points.
\begin{proof}
We first show that $p|_{\partial\widetilde{B}}$ is injective.  We deal with vertices, then edges.
Let $\widetilde y$ and $\widetilde{z}$ be the two vertices of $\widetilde{B}$ and let $y=p(\widetilde y)$ and $z=p(\widetilde z)$.  Because $\widetilde{\alpha}$ and $\widetilde{\beta}$ have opposite signs of intersection at $\widetilde{y}$ and $\widetilde{z}$ (after choosing arbitrary orientations of $\alpha$ and $\beta$), $y$ and $z$ are distinct points.  

Now we consider edges of $p|_{\partial\widetilde{B}}$.  Since $p$ is a covering space, $p|_{\widetilde{B}_{\widetilde{\alpha}}}$ is an immersion.  Since $\widetilde{B}$ is innermost, the interior of $\widetilde{B}_{\widetilde{\alpha}}$ maps to $\alpha\setminus\beta$.  And since $y\neq z$, $p|_{\widetilde{B}_{\widetilde{\alpha}}}$ is injective.  By symmetry, $p|_{\widetilde{B}_{\widetilde{\beta}}}$ is injective.

We now show that $p(\widetilde{B})$ is an innermost bigon in $X$.  Since $p|_{\partial\widetilde{B}}$ is injective, $p(\partial\widetilde{B})$ is a simple closed curve formed from one arc of $\alpha$ and one arc of $\beta$.  Moreover the arc of $\alpha$ only intersects $\beta$ at its endpoints and vice versa.  The image of the disk $\widetilde{B}$ under $p$ defines a nullhomotopy of $p(\partial\widetilde{B})$.  Also, the interior $p($int$\widetilde{B})$ is disjoint from $\alpha\cup\beta$.  It follows that $p(\partial\widetilde{B})$ is the boundary of an innermost bigon $B$ bounded by $\alpha\cup\beta$ and that $B=p(\widetilde{B})$.

It remains to show that that $\widetilde{B}$ does not contain any ramified points.  First we show that $p'=p|_{\widetilde{B}}$ is a covering space of $B=p(\widetilde{B})$.  
It suffices to show $\widetilde{B}$ is a connected component of $p^{-1}(B)$.  The interior of $B$ lies only to one side of $\partial B$ and is disjoint from $\alpha\cup\beta$, therefore $p^{-1}(p(\widetilde{B}))\setminus(\widetilde{\alpha}\cup\widetilde{\beta})$ lies only to one side of $\partial\widetilde{B}$ and is disjoint from $\widetilde{\alpha}\cup\widetilde{\beta}$.  It follows that $\widetilde{B}$ is the entire connected component of $p^{-1}(B)=p^{-1}(p(\widetilde{B}))$ containing $\widetilde{B}$.

The restriction of $p'$ to $\partial\widetilde{B}$ is a covering space of $\partial B$.  Because $p|_{\partial\widetilde{B}}=p'|_{\partial\widetilde{B}}$ is injective, the degree of $p'$ is 1. Since every nontrivially branched cover has degree greater than 1, 
$p|_{\widetilde{B}}$ is unbranched, hence $\widetilde{B}$ does not contain any ramified points.
 \end{proof}

 \begin{lemma}\label{annuli} 
  If $\widetilde{\alpha}$ and $\widetilde{\beta}$ are isotopic and disjoint in $\widetilde{X}$, then $\alpha$ and $\beta$ are isotopic in $X$.   \end{lemma}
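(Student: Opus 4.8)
The plan is to locate inside $\widetilde X$ an annulus cobounded by a component of $\widetilde\alpha$ and a component of $\widetilde\beta$, arising as a complementary region of $\widetilde\alpha\cup\widetilde\beta$, and then to push it down to $X$: Riemann--Hurwitz together with property NU will force its image to be a genuine annulus in $X$, disjoint from the branch points, with boundary $\alpha\cup\beta$.

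First the reductions. Since $p$ is surjective and $\widetilde\alpha\cap\widetilde\beta=p^{-1}(\alpha\cap\beta)=\emptyset$, the curves $\alpha$ and $\beta$ are disjoint and distinct in $X$. It suffices to treat the case that $\alpha$ and $\beta$ are essential and not peripheral, and in that case I would first note that under property NU the multicurves $\widetilde\alpha$ and $\widetilde\beta$ have no inessential components: an inessential component of $\widetilde\alpha$ or $\widetilde\beta$ would bound an innermost disk $\widetilde D$, necessarily a single component of $p^{-1}(D)$ for $D=p(\widetilde D)$ a complementary region of $\alpha\cup\beta$ downstairs, and Riemann--Hurwitz applied to the branched cover $\widetilde D\to D$, combined with the fact that every preimage of a branch point is ramified, would force $D$ to be a disk bounded by $\alpha$ (or $\beta$) containing at most one branch point --- that is, $\alpha$ (or $\beta$) inessential or peripheral, a contradiction.

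Now, being disjoint, isotopic, essential multicurves, $\widetilde\alpha$ and $\widetilde\beta$ cobound a disjoint union of embedded annuli, so there is a component $\widetilde C$ of $\widetilde X\setminus(\widetilde\alpha\cup\widetilde\beta)$ whose closure is an annulus with one boundary circle on $\widetilde\alpha$ and the other on $\widetilde\beta$. Because $\widetilde X\setminus(\widetilde\alpha\cup\widetilde\beta)=p^{-1}\bigl(X\setminus(\alpha\cup\beta)\bigr)$ and $\widetilde C$ is connected, $\widetilde C$ is a single connected component of $p^{-1}(C)$, where $C=p(\widetilde C)$ is a component of $X\setminus(\alpha\cup\beta)$, and $p|_{\widetilde C}\colon\widetilde C\to C$ is a possibly branched covering of some degree $d$. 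Passing to the surfaces cut along $\widetilde\alpha\cup\widetilde\beta$ and along $\alpha\cup\beta$, the two boundary circles of the annulus $\overline{\widetilde C}$ map onto $\alpha$ and onto $\beta$, so the compact surface $\overline C$ has exactly two boundary circles, which $p$ identifies homeomorphically with $\alpha$ and $\beta$; in particular $\overline C$ embeds back into $X$ as a subsurface with $\partial\overline C=\alpha\cup\beta$. Riemann--Hurwitz then gives
\[
0=\chi\bigl(\overline{\widetilde C}\bigr)=d\,\chi\bigl(\overline C\bigr)-\sum_{\widetilde x\in p^{-1}(B)\cap\widetilde C}\bigl(m_{\widetilde x}-1\bigr),
\]
where $m_{\widetilde x}$ is the ramification number of $\widetilde x$ and $B$ is the set of branch points. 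Since $\overline C$ has two boundary components it is not a disk, so $\chi(\overline C)\le 0$; and the sum is nonnegative. Hence $\chi(\overline C)=0$, so $\overline C$ is an annulus, and the sum vanishes; but by property NU every term $m_{\widetilde x}-1$ is at least $1$, so there are no terms, i.e.\ $p^{-1}(B)\cap\widetilde C=\emptyset$, and since $p|_{\widetilde C}$ is onto, $C$ contains no branch points. Thus $\overline C$ is an annulus in $X$ disjoint from the branch points with boundary $\alpha\cup\beta$, so $\alpha$ and $\beta$ are isotopic in $X^{\circ}$, hence in $X$.

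The main obstacle --- and the one place the hypotheses are really used --- is this last Riemann--Hurwitz computation together with property NU: without NU the annular complementary region upstairs could perfectly well cover a downstairs region that is an annulus containing branch points (or a subsurface of higher genus), and the conclusion fails, exactly as for the simple branched cover of Section~\ref{WCL}. The secondary point requiring care is the existence of the annular complementary region $\widetilde C$ in the first place, which is why one must first rule out inessential components of $\widetilde\alpha$ and $\widetilde\beta$ --- itself another application of Riemann--Hurwitz and property NU, as indicated above.
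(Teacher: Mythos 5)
Your argument is genuinely different from the paper's and is essentially sound on its own terms, but it proves a weaker statement than Lemma \ref{annuli} as written: you invoke property NU twice (to exclude inessential components of $\widetilde\alpha$ and $\widetilde\beta$, and to pass from ``no ramification in $\widetilde C$'' to ``no branch points in $C$''), whereas the lemma is stated, and proved in the paper, for an arbitrary finite branched cover with $\chi(\widetilde X)<0$; in this section only Lemma \ref{bigons} interacts with NU, and only in its application inside the proof of Theorem \ref{ramified}. The paper avoids NU by running the argument in the opposite direction: it first projects the isotopy of multicurves down to $\bar X$ to produce a candidate annulus $A$ cobounded by $\alpha$ and $\beta$, then proves a combinatorial claim that either $p^{-1}(A)$ or $p^{-1}(Z)$ ($Z$ the complementary region) is a disjoint union of annuli, and applies Riemann--Hurwitz to the \emph{entire} preimage of $A$. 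Since every branch point in $A$ has at least one ramified preimage somewhere in $p^{-1}(A)$ by the definition of a branch point, the full-preimage computation needs no hypothesis on unramified preimages. Your single-component computation buys a more local and arguably cleaner argument (no alternating-regions claim), but the price is exactly NU, since a branch point in $C$ could a priori have all its ramified preimages in components of $p^{-1}(C)$ other than $\widetilde C$.

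Two further points. First, your opening reduction ``it suffices to treat the case that $\alpha$ and $\beta$ are essential and not peripheral'' is asserted but not justified, and the lemma as stated makes no such restriction; in fairness, the paper's own proof also tacitly needs the components of $\widetilde\alpha\cup\widetilde\beta$ to be essential for adjacent regions to be annuli, so you are at least explicit about a hypothesis the published argument leaves implicit. Second, the phrase ``cobound a disjoint union of embedded annuli'' overreaches: for four parallel essential curves in the order $\widetilde\alpha_1,\widetilde\alpha_2,\widetilde\beta_1,\widetilde\beta_2$ no pairing yields disjoint annuli. What is true, and all you need, is that among the parallel components in a single isotopy class some \emph{adjacent} pair consists of one $\widetilde\alpha$-component and one $\widetilde\beta$-component, and these cobound a complementary annular region; a one-line pigeonhole argument on the linear order of parallel components would close this.
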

 \begin{proof}
Any isotopy between $\widetilde{\alpha}$ and $\widetilde{\beta}$ in $\widetilde{X}$ projects to a homotopy between $\alpha$ and $\beta$ in $\bar{X}$.  Since $\alpha$ and $\beta$ are homotopic in $\bar{X}$ and disjoint, they bound an annulus in $\bar X$.  Therefore they bound an annulus $A$ in $X$, though $A$ might contain branch points. Our goal is to show that $A$ does not contain any branch points.

Because $A$ is an annulus in $X$, $\alpha$ and $\beta$ divide $X$ into two closed subsurfaces, $A$ and $Z$ with  $\partial A=\partial Z=\alpha\cup\beta$ sharing both boundary components $\alpha$ and $\beta$.
Let $\widetilde{A}=p^{-1}(A)$ and $\widetilde{Z}=p^{-1}(Z)$.

\vspace{6pt}
{\it Claim:} Either $\widetilde{A}$ or $\widetilde{Z}$ is a disjoint union of annuli in $\widetilde{X}$.

\vspace{6pt}

Assuming the claim, suppose $\widetilde{A}$ is a disjoint union of annuli.  The restriction of $p$ to $\widetilde{A}$ is a (possibly branched) covering space.  The Euler characteristic $\chi(\widetilde{A})$ is 0.  By the multiplicativity of Euler characteristic, $\chi(p(\widetilde{A}))=\chi(A)$ is also 0.  By the Riemann--Hurwitz formula \cite[Section 7.2.2]{primer}, an annulus with branch points has negative Euler characteristic, therefore $A$ has no branch points.

\vspace{9pt}
{\it Proof of claim}.
We will call the components of $\widetilde{A}$ and $\widetilde{Z}$ regions of $\widetilde{X}$.  Each region in $\widetilde{X}$ has at least one boundary component that is a component of $\widetilde{\alpha}$ and at least one boundary component that is a component of $\widetilde{\beta}$.  Similarly, each component of $\widetilde{\alpha}\cup\widetilde{\beta}$ is a boundary component of exactly one $\widetilde{A}$-region and exactly one $\widetilde{Z}$-region.  Because $\widetilde{\alpha}$ and $\widetilde{\beta}$ are isotopic, either the $\widetilde{A}$-region or $\widetilde{Z}$-region bounded by any component of $\widetilde{\alpha}\cup\widetilde{\beta}$ is an annulus.  

Suppose for every component of $\widetilde{\alpha}\cup\widetilde{\beta}$, exactly one of the adjacent regions is an annulus.  Then the regions of $X$ alternate between annular and non-annular regions of $X$.  Therefore either all of the components of $\widetilde{A}$ or all of the components of $\widetilde{Z}$ are annuli and the claim is proved.

Suppose now there is some component of $\widetilde{\alpha}\cup\widetilde{\beta}$ such that both the adjacent regions are annuli.  We say two such annular regions are {\it adjacent}.  The adjacency of annular regions generates an equivalence relation of annular regions of $\widetilde{X}$.  Let $\{\mathcal{F}_i\}$ be the set of equivalence classes of annular regions of $\widetilde{X}$.  Where $|\mathcal{F}_i|>1$, there exist two annuli in each $\mathcal{F}_i$ that are adjacent to only one annulus in $\mathcal{F}_i$; call these {\it end annuli}.  Where $|\mathcal{F}_i|=1$, $\mathcal{F}_i$ consists of a single annulus, which is also an end annulus.

Because $\widetilde{\alpha}$ and $\widetilde{\beta}$ are isotopic in $\widetilde{X}$, there are the same number of components of $\widetilde{\alpha}$ and $\widetilde{\beta}$ in the union of the elements of $\mathcal{F}_i$ for any $i$.  It follows that $|\mathcal{F}_i|$ is odd.  If the end annuli of some $\mathcal{F}_i$ are in $\widetilde{A}$, then the adjacent non-annular regions are in $\widetilde{Z}$.  All annular regions in $\widetilde{X}$ are in some class $\mathcal{F}_i$.  By replacing annular regions in the case above with the $\mathcal{F}_i$, we see that the $\mathcal{F}_i$ alternate with the non-annular regions in $\widetilde{Z}$.  Then end annuli of all $\mathcal{F}_i$ are in $\widetilde{A}$, and the claim is proved.
\end{proof}

We may now prove Theorem \ref{ramified}.

\begin{proof}[Proof of Theorem \ref{ramified}]
By Proposition \ref{curveisotopy}, it is enough to show that $p$ has the isotopy projection property.
	Let $\alpha$ and $\beta$ be transverse simple closed curves in $X$.  We need to show that if $\widetilde{\alpha}=p^{-1}(\alpha)$ and $\widetilde{\beta}=p^{-1}(\beta)$ are isotopic, then $\alpha$ and $\beta$ are isotopic.  If $\widetilde{\alpha}$ and $\widetilde{\beta}$ are not disjoint, $\widetilde{\alpha}$ and $\widetilde{\beta}$ bound an innermost bigon $\widetilde{B}$.  By assumption, there are no preimages of branch points with unramified preimage.  Therefore Lemma \ref{bigons} says that $p(\widetilde{B})$ is an innermost bigon in $\alpha\cup\beta$ in $X$ and it does not contain any branch points.  There is then an isotopy of $\alpha$ in $X$ that reduces the number of intersections of $\alpha$ and $\beta$ (by passing $\alpha$ through $p(\widetilde{B})$).  We continue reducing intersections of $\alpha$ and $\beta$ by isotopy of $\alpha$ in $X$ until $\alpha$ and $\beta$ are disjoint, hence $\widetilde{\alpha}$ and $\widetilde{\beta}$ are also disjoint.	 
Once $\widetilde{\alpha}$ and $\widetilde{\beta}$ are disjoint, isotopic multicurves, Lemma \ref{annuli} implies that $\alpha$ and $\beta$ are isotopic.
\end{proof}

Theorem \ref{ramified} can also be proved using quadratic differentials.  We sketch the proof.  Because the branch points are treated as marked points, a quadratic differential on $X$ may have a pole at a branch point.  However, quadratic differentials on $\widetilde{X}$ are not allowed to have any poles.  When the preimages of all branch points are ramified, a quadratic differential on $X$ lifts to a quadratic differential on $\widetilde{X}$ \cite[Section 11.3.3]{primer}.  A given surface $S$ (and hyperbolic structure) and a quadratic differential on $S$ determine a Teichm\"{u}ller geodesic in the Teichm\"uller space of $S$.  Since all quadratic differentials on $X$ lift to quadratic differentials on $\widetilde{X}$, Teichm\"uller geodesics in Teichm\"uller space of $X$ lift to Teichm\"uller geodesics in the Teichm\"uller space of $\widetilde{X}$.  Therefore the Teichm\"uller space of $X$ isometerically embeds in the Teichm\"uller space of $\widetilde{X}$, which is equivalent to the Birman--Hilden property by the work of MacLachlan and Harvey \cite{MH}.

\section{Simple covers}\label{Simple}
A {\it simple cover} of surfaces is a covering space such that each branch point has one preimage with ramification number 2 and all other preimages are unramified.  If the degree of a simple cover is $n$, then each branch point has $n-1$ preimages.

\subsection{Colorings} We first develop a consistent way of labeling preimages of branch points.  The ideas in this section are inspired by the work of Berstein and Edmonds \cite{BE}.

Let $\widetilde X\rightarrow X$ be an $n$-fold branched covering space of surfaces and fix some orientation of $X$.  Fix a base point in $X$ that is not a branch point; call it $x_0$.   
Label the points in $p^{-1}(x_0)$ with the numbers $1,\cdots,n$ and fix throughout.  Let $x_1,\cdots,x_k$ be the branch points of the covering space.  Fix a set of simple arcs $\gamma_1,\cdots,\gamma_k$ in $X$ that are disjoint on their interiors and such that $\gamma_i$ connects $x_0$ and $x_i$. We call such a set of arcs an {\it arc system}.  
Let $\rho:\pi_1(X^{\circ},x_0)\rightarrow\Sigma_n$ be the monodromy homomorphism of the unbranched covering space $\widetilde{X}^{\circ}\rightarrow X^{\circ}$ with respect to our labeling of the points in $p^{-1}(x_0)$.  Let $c_i\in\pi_1(X^{\circ},x_0)$ be the homotopy class of the counterclockwise-oriented boundary of a neighborhood of $\gamma_i\subset X$.  We define the {\it coloring} of $x_i$, denoted $\color(x_i)$, as $\color(x_i)=\rho(c_i)$.   

\begin{figure}[h]
\begin{center}
\labellist\small\hair 2.5pt
        \pinlabel {$x_i$} by 1 0 at 134 45
         \pinlabel {$1$} by 0 0 at 25 110
          \pinlabel {$\widetilde{x}_i^1$} by 0 0 at 35 193
          \pinlabel {$2$} by 0 0 at 26 238
 \pinlabel {$(12)$} by 0 0 at 55 220
           \pinlabel {$3$} by 0 0 at 77 150
            \pinlabel {$5$} by 0 0 at 123 236
            \pinlabel {$4$} by 0 0 at 165 150
             \pinlabel {$6$} by 0 0 at 230 125
 \pinlabel {$(345)$} by 1 0 at 168 220
  \pinlabel {$\widetilde{x}_i^2$} by 0 0 at 131 193
          \pinlabel {$8$} by 0 0 at 230 235
           \pinlabel {$7$} by 0 1 at 285 188
             \pinlabel {$\widetilde{x}_i^3$} by 0 0 at 240 193
          \pinlabel {$9$} by 0 1 at 178 188
 \pinlabel {$(6789)$} by 0 0 at 265 220
        \pinlabel {$c_x$} by 0 1 at 115 65
          \pinlabel $\gamma_x$ by 0 -1 at 115 25 
         \pinlabel $x_0$ by 1 -1 at 95 0 
      
        \endlabellist
\centerline{\includegraphics[scale=1]{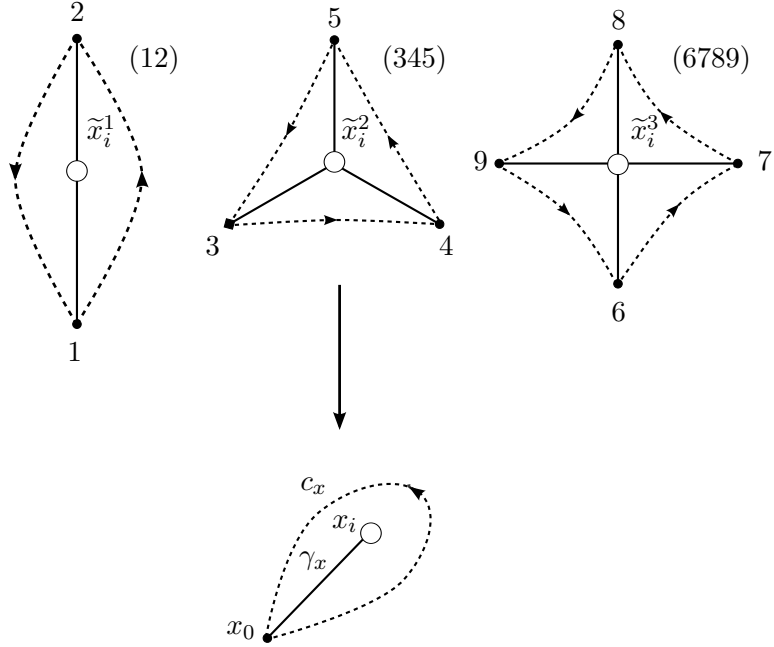}}
\end{center}
\caption{The coloring $(12)(345)(6789)$.}

\label{coloringcases}\end{figure}
Alternatively, we can think of coloring the preimages of the branch point $x_i$ instead of coloring $x_i$ itself.  Let $x_i^1,\cdots,x_i^{\ell_i}$ be the preimages of $x_i$ under $p$.  Each arc of $p^{-1}(\gamma_i)$ connects some point of $p^{-1}(x_0)$ to some $x_i^r$.  The arcs of $p^{-1}(\gamma_i)$ containing a given $x_i^r$ can be ordered cyclically counterclockwise near $x_i^r$.  This defines a cyclic ordering of the points of $p^{-1}(x_0)$ to which $p^{-1}(\gamma_i)$ connects $x_i^r$.  Define $\color(x_i^r)$ as the cycle in $\Sigma_n$ that cyclically permutes the labels of the points; see Figure \ref{coloringcases}.  Then it is easy to see that $$\color(x_i)=\prod_{r=1}^{\ell_i}\color(x_i^r).$$

\p{Coloring graph}  Let $\widetilde{X}\rightarrow X$ be a branched covering space and $x_i$ and $x_j$ be branch points in $X$.  

Let $x_i^1,\cdots,x_i^{\ell_i}$ in $\widetilde{X}$ be the elements of $p^{-1}(x_i)$ and $x_j^1,\cdots,x_j^{\ell_j}$ the elements of $p^{-1}(x_j)$.  We form a loopless bipartite multigraph $\Gamma_{ij}$ as follows: there is a vertex for every $x_i^r$ and $x_j^q$.    
  There is one edge of $\Gamma_{ij}$ between the vertices corresponding to $x_i^r$ and $x_j^q$ for every element of the set $\{1,\cdots,n\}$ in the support of both $\color(x_i^r)$ and $\color(x_j^q)$ (the support of a permutation $\sigma\in\Sigma_n$ is any number $1,\cdots,n$ such that $\sigma\cdot n\neq n$).  We note that the degree of the vertex of $\Gamma_{ij}$ corresponding to $x_i^{\ell}$ is the ramification number of $x_i^{\ell}$.

We now give an alternative description of the edges of the vertices of $\Gamma_{ij}$.  Let $\gamma=\gamma_i\cup\gamma_j$.  
As above, the are the elements of $x_i^r$ and $x_j^q$ and there is one edge between the vertices $x_i^r$ and $x_j^q$  for each arc of $p^{-1}(\gamma)$ connecting $x_i^r$ and $x_j^q$.

\subsection{Simple covers do not have the Birman--Hilden property}\label{covers}
Before we prove Theorem \ref{simple}, we need a lemma.

\begin{lemma}\label{goodgamma}
Let $p:\widetilde{X}\rightarrow X$ be a $n$-fold cover of surfaces where $n\geq3$, $\chi(\widetilde{X})<0$, and there are at least two branch points $x_1,...,x_k$ in $X$.  Choose a base point $x_0$ that is not a branch point.  There exists an arc system $\gamma_1,\cdots,\gamma_k$ in $X$ where with respect to $\gamma_1,\cdots,\gamma_k$, $\color(x_i)\neq\color(x_j)$ for some $1\leq i,j\leq k$.
\end{lemma}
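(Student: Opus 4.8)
The plan is to argue by contradiction: suppose that for \emph{every} arc system $\gamma_1,\dots,\gamma_k$ we have $\color(x_i)=\color(x_j)$ for all pairs $i,j$. Since the cover is $n$-fold with $n\ge 3$ and has at least two branch points, and since each branch point must have at least one ramified preimage (nontriviality of branching), each $\color(x_i)$ is a nontrivial element of $\Sigma_n$; call its common value $\sigma$. The strategy is to show that by modifying a single arc $\gamma_j$ — sliding its interior across one of the loops $c_\ell$ generating $\pi_1(X^\circ,x_0)$ — one changes $\color(x_j)$ by conjugation while fixing all other colorings, and that one can choose such a modification producing a genuinely different permutation. This would contradict the assumption that all colorings stay equal for all arc systems.

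First I would make precise how the coloring changes when the arc system is modified. Recall $\color(x_j)=\rho(c_j)$ where $c_j$ is the boundary of a neighborhood of $\gamma_j$. If I homotope $\gamma_j$ (keeping endpoints $x_0$ and $x_j$ fixed, and keeping interiors disjoint from the other arcs, which can be arranged since $X^\circ$ has positive genus or enough punctures once $\chi(\widetilde X)<0$ and there are $\ge 2$ branch points) so that it wraps once around a loop $g\in\pi_1(X^\circ,x_0)$, then $c_j$ is replaced by $g c_j g^{-1}$ and hence $\color(x_j)$ is replaced by $\rho(g)\sigma\rho(g)^{-1}$. Meanwhile the other $c_i$ are unaffected (up to a homotopy not crossing $\gamma_j$), so $\color(x_i)=\sigma$ still. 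So the assumption forces $\rho(g)\sigma\rho(g)^{-1}=\sigma$ for every $g$ in the image of $\rho$; that is, $\sigma$ is central in the monodromy group $G=\rho(\pi_1(X^\circ,x_0))\le\Sigma_n$, which acts transitively on $\{1,\dots,n\}$ (connectedness of $\widetilde X$).

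Next I would derive a contradiction from ``$\sigma$ central in a transitive subgroup $G\le\Sigma_n$.'' A nontrivial permutation commuting with every element of a transitive group must be fixed-point-free (if $\sigma$ fixes $m$ then $\sigma$ fixes the whole orbit $Gm=\{1,\dots,n\}$, so $\sigma=\mathrm{id}$), and in fact must be \emph{semiregular}. But I also need to produce one more arc-system modification that changes $\color(x_j)$ even when we only have that $\sigma$ is semiregular — here the key extra move is to braid $\gamma_j$ around the branch point $x_i$ (not just around loops of $X^\circ$), which conjugates $\color(x_j)$ by $\color(x_i)=\sigma$ itself (harmless) but, more usefully, I should instead braid $x_i$ past the base of $\gamma_j$: this replaces $\color(x_j)$ by $\sigma \color(x_j)\sigma^{-1}$ on one side but simultaneously forces a compatibility between $\color(x_i)$ and $\color(x_j)$ that, combined with $n\ge 3$ and two distinct branch points, cannot be satisfied if they are always equal and central. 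The cleanest route is probably: since $G$ is transitive and $n\ge 3$, $G$ is not abelian unless it is cyclic of prime order acting regularly, and one checks directly that even in the cyclic case an arc-system change around a generator of the $\mathbb{Z}/n$-cover (which exists because $\widetilde X^\circ\to X^\circ$ has such monodromy) moves $\color(x_j)$ to a different power of the generator — using that there are at least two branch points so the product of the colorings times the other monodromy generators is trivial, pinning down the powers and yielding the contradiction.

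The main obstacle I expect is the last step: ruling out the ``degenerate'' possibility that the monodromy group is cyclic (or more generally that $\sigma$ is central) while still being forced to have all colorings equal. Handling this requires carefully exhibiting an explicit arc-system modification and tracking its effect on $\rho(c_j)$ via the change-of-arc (braid/half-twist) action on $\pi_1(X^\circ,x_0)$, and then using the global relation $\prod_i c_i \cdot (\text{standard generators of }\pi_1(X^\circ))=1$ to show the modified coloring genuinely differs. Making the ``disjoint interiors'' bookkeeping rigorous — so that modifying $\gamma_j$ really does leave the other colorings fixed — is where the combinatorial topology of arcs on surfaces does the work, and it is the part I would write most carefully; everything else is conjugation calculus in $\Sigma_n$.
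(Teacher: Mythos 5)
Your first two steps are sound and in fact capture the essential mechanism of the paper's proof: changing the arc system replaces each $c_i$ by a conjugate, so if every arc system yielded identical colorings, the common value $\sigma$ would have to commute with everything you can conjugate by, and a nontrivial element that is central in a transitive subgroup of $\Sigma_n$ is fixed-point-free. (The paper realizes the needed conjugation concretely: it places all the arcs in a disk $D$ and drags one arc around a simple nonseparating loop $\delta$ disjoint from the interior of $D$, which is exactly the ``disjoint interiors'' bookkeeping you defer; on a punctured sphere there are no such loops, but there the $c_i$ themselves generate $\pi_1(X^{\circ},x_0)$, so the monodromy group is $\langle\sigma\rangle$ and centrality is automatic.)

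The genuine gap is your final step, and it cannot be repaired at the level of generality you are working in: from ``$\sigma$ is central in the transitive monodromy group'' alone there is no contradiction. The statement as printed omits a hypothesis that the paper's own proof invokes, namely that the cover is \emph{simple}, so that every $\color(x_i)$ is a transposition. Without simplicity the lemma is false: take the regular $\mathbb{Z}/3$ cover of the sphere branched at six points, each with monodromy $(123)$. Then $n=3$, there are at least two branch points, and $\chi(\widetilde{X})=-6<0$, but the monodromy group is abelian, so every change of arc system conjugates every coloring by an element of an abelian group and hence changes nothing; all colorings equal $(123)$ for every arc system. This same example shows that your proposed ``cleanest route'' --- that in the cyclic case an arc-system change moves $\color(x_j)$ to a different power of the generator --- is false, and the global relation among the $c_i$ and the handle generators cannot rescue it. The one-line fix, once simplicity is restored, is exactly where your argument should have ended: a transposition fixes $n-2\geq 1$ points, so it cannot be fixed-point-free, a contradiction. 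With that line added, your route becomes a clean unification of the paper's two cases (punctured sphere versus positive genus); without it, the proof does not close.
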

\begin{proof}
First choose any arc system $\{\gamma_1,\cdots,\gamma_k\}$.  
As above, let $c_i\in\pi_1(X^{\circ},x_0)$ be the homotopy class of the boundary of a regular neighborhood of $\gamma_i\subset X$ that is oriented counterclockwise.  To prove the lemma we consider two cases: when $X^{\circ}$ is a punctured sphere and when $X^{\circ}$ is a punctured surface of genus $g$ with $g\geq 1$.  In both cases since the cover is simple $\color(x_i)$ is a transposition for all $i$.

Suppose $X^{\circ}$ is a punctured sphere.  In this case $\{c_1,\cdots,c_k\}$ generate $\pi_1(X^{\circ},x_0)$.  As $\rho(c_i)=\color(x_i)$, the cycles $\{\color(x_1),\cdots,\color(x_k)\}$ generate $\rho(\pi_1(X^{\circ},x_0))$.  Since $n\geq3$, and $\widetilde{X}$ is connected, $\rho(\pi_1(X^{\circ},x_0))$ is generated by more than one transposition.  Hence there exist $x_i$ and $x_j$ such that $\color(x_i)\neq\color(x_j)$.

Now suppose $X$ has positive genus.  We will assume that with respect to the arc system $\gamma_1,\cdots,\gamma_k$, $\color(x_i)=\color(x_j)$  for all $i$ and $j$.  (Otherwise we have nothing to show).  Without loss of generality $\color(x_i)=(12)$.  There exists an embedded disk $D\subset X$ with $x_0\in\partial D$ and $\gamma_i\subset D$ for all $i$ as in Figure \ref{notequal}.  
\begin{figure}[h]
\begin{center}
\labellist\small\hair 2.5pt
        \pinlabel {$D$} by 1 0 at 335 45
         \pinlabel {$\gamma_1$} by 0 1 at 355 100
 \pinlabel {$\gamma_k$} by 0 -1 at 355 48
 \pinlabel {$\delta$} by 0 0 at 286 41
  \pinlabel {$\gamma_1'$} by 0 -1 at 250 120
         \pinlabel $x_0$ by 1 -1 at 328 81 
      
        \endlabellist
\centerline{\includegraphics[scale=1]{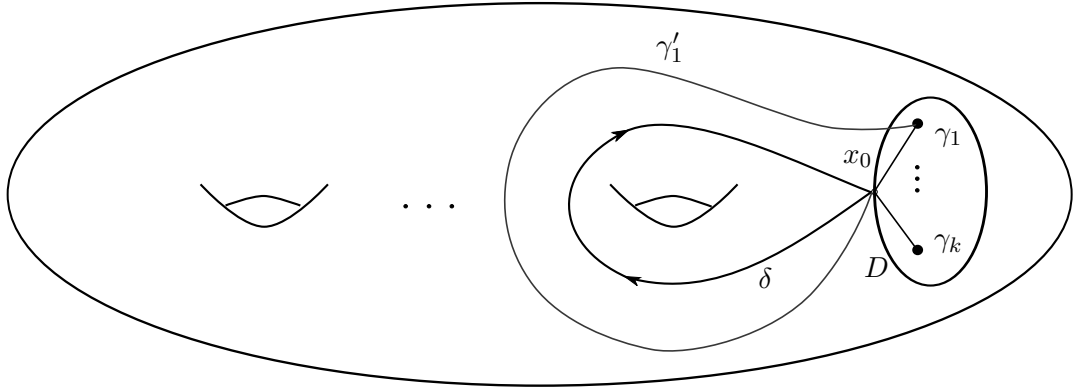}}
\end{center}
\caption{The picture used in the proof of Lemma \ref{goodgamma}.}
\label{notequal}
\end{figure}

Choose a nonseparating simple loop $\delta$ in $X^{\circ}$ that is disjoint from the interior of $D$ and where $\rho(\delta)\cdot1\not\in\{1,2\}$. Such $\delta$ exists because $\pi_1(X^{\circ},x_0)$ is generated by the $c_i$ and nonseparating simple loops in $X^{\circ}$ disjoint from the interior of $D$ and because $\rho(\pi_1(X^{\circ},x_0))$ is transitive.  Up to homeomorphism of $X$ and relabeling the $c_i$, we have the configuration shown in Figure \ref{notequal}. The conjugate $\delta c_1\delta^{-1}$ is an element of $\pi_1(X^{\circ},x_0)$ that collapses in $X$ to the arc $\gamma_1'$ shown in Figure \ref{notequal}.  The arc $\gamma_1'$ is disjoint from the interior of each $\gamma_i,i\neq1$.  We also have that $\rho(\delta c_1\delta^{-1})=(\rho(\delta)\cdot 1 \rho(\delta)\cdot 2)\neq(12)$.  Therefore with respect to $\{\gamma_1',\gamma_2,\cdots,\gamma_k\}$, $\color(x_1)$ is not equal to $\color(x_i), i\neq 1$.
\end{proof}
We now prove Theorem \ref{simple}, that simple covers do not have the Birman--Hilden property when the degree of the cover is at least 3.  
\begin{proof}[Proof of Theorem \ref{simple}]
%
Recall that by hypothesis $\widetilde{X}\rightarrow X$ has at least two branch points and that $X^{\circ}$ is not a pair of pants. 

Label the branch points $x_1,\cdots,x_k$.  Fix an arc system $\gamma_1,\cdots,\gamma_k$ so that there exist branch points $x_i$ and $x_j$ with $\color(x_i)\neq\color(x_j)$.  This is possible by Lemma \ref{goodgamma}.

Because there are only two vertices of $\Gamma_{ij}$ with degree larger than one, the only circuits in $\Gamma_{ij}$ can occur when there are two edges connecting the vertices of degree 2.  Since $\color(x_i)\neq\color(x_j)$, this is not the case.  It follows that $\Gamma_{ij}$ is a forest.

Let $\alpha$ be a simple closed curve in $X$ homotopic to the boundary of the neighborhood of $\gamma_i\cup\gamma_j$.  Then $p^{-1}(\alpha)$ is homotopic to the boundary of a neighborhood of $p^{-1}(\gamma_i\cup\gamma_j)$.  The simple closed curve $\alpha$ is essential because it bounds a disk containing two branch points and $X^{\circ}$ is not a sphere with three punctures.  Since $\Gamma_{ij}$ is a forest, all components of $p^{-1}(\alpha)$ are trivial.  Therefore $p$ does not have the weak curve lifting property, and by Proposition \ref{necessity} it does not have the Birman--Hilden property.
\end{proof}

\bibliography{SIIC}
\end{document}